\numberwithin{equation}{section}
\title{A DECAY ESTIMATE FOR CUBIC DEFOCUSING NON-LINEAR SCHR\"ODINGER EQUATION IN THREE DIMENSIONS}
\author{YI SUN}
\address[Y. Sun]{Academy of Mathematics and Systems Science, The Chinese academy of Sciences, Beijing 100190, CHINA}
\email{sunyi@amss.ac.cn}
\newtheorem{thm}{Theorem}
\numberwithin{thm}{section}
\newtheorem{prop}{Proposition}
\numberwithin{prop}{section}
\newtheorem{lem}{Lemma}
\numberwithin{lem}{section}
\newtheorem{cor}{Corollary}
\numberwithin{cor}{section}
\theoremstyle{remark}
\newtheorem{rmk}{Remark}
\theoremstyle{definition}
\newtheorem*{acknowledgements}{Acknowledgements}
\numberwithin{rmk}{section}
\renewcommand{\i}{\mathrm{i}}
\newcommand{\e}{\mathrm{e}}
\renewcommand{\H}{\mathrm{H}}
\renewcommand{\L}{\mathrm{L}}
\newcommand{\B}{\mathrm{B}}
\renewcommand{\P}{\mathrm{P}}
\newcommand{\I}{\mathrm{I}}
\newcommand{\M}{\mathrm{M}}
\begin{document}
	\begin{abstract}
		In this short note, we prove a decay estimate for non-linear solutions of 3D cubic defocusing non-linear Schr\"odinger equation.
	\end{abstract}
	\maketitle
	\markright{DECAY ESTIMATE FOR 3D CUBIC NLS}
	\pagenumbering{arabic}
	
	\section{Introduction}
	\subsection{Background and main results}
	We consider cubic defocusing non-linear Schr\"odinger equation in $\mathbb{R}^3$,
	\begin{equation}\label{nls}
		\left\{\begin{aligned}
			\i\partial_t u + \Delta u &= |u|^2u,\quad t\in [0,\infty),\ x\in \mathbb{R}^3,\\
			u(0,x) &= u_0(x)\in \dot{\H}^{1/2}_{x}(\mathbb{R}^3)\cap \L^1_x(\mathbb{R}^3).
		\end{aligned}\right.
	\end{equation}
	$u(t,x):[0,\infty)\times \mathbb{R}^3\to \mathbb{C}$ is a complex-valued function. Our particular choice of non-linearity makes the equation $\smash{\dot{\H}^{1/2}_x}$-\textit{critical}: If $u(t,x)$ solves \eqref{nls}, 
	$$u_{\lambda}(t,x) := \lambda u(\lambda^2 t,\lambda x),\quad \lambda>0,$$
	also solve \eqref{nls} and the $\smash{\dot{\H}^{1/2}_x}$-norm is invariant under scaling.
	
	Also the equation preserves the \textit{mass} of solutions
	$$M[u(t)] = \int_{\mathbb{R}^3} |u(t,x)|^2\ \mathrm{d}x.$$
	
	Dispersive estimates play a fundamental role in the study of non-linear dispersive equations. It is well known that for the linear propagator $\e^{\i t\Delta}$ of Schr\"odinger equation in $\mathbb{R}^d$, one has
	\begin{equation}\label{dispersive-estimate}
		\|\e^{\i t\Delta}u_0\|_{\L^{\infty}_x}\lesssim t^{-\frac{d}{2}}\|u_0\|_{\L^1_x},\quad t>0.
	\end{equation}
	
	Meanwhile, for defocusing NLS, many scattering type results have been widely studied. Scattering means the solution $u(t)$ to the defocusing non-linear equation asymptotically behaves like a linear solution. This means one can find $u^+_0$ in certain Sobolev space, such that
	\begin{equation}\label{general-scattering}
		\|u(t) - \e^{\i t\Delta}u^{+}_0\|_{\dot{\H}^s_x}\to 0,\quad \text{as $t\to \infty$}.
	\end{equation}
	
	A natural question may be that in what sense one can recover estimate \eqref{dispersive-estimate} for solutions to the non-linear equations. This problem has been studied in a series works \cite{F-K-V-Z-2025} by Fan, Killip, Visan and Zhao, \cite{F-S-Z-2024} by Fan, Staffilani and Zhao, \cite{F-Z-2021,F-Z-2023} by Fan and Zhao, and \cite{K-2025} by Kowalski.
	
	This work is mainly inspired by work \cite{K-2025} of Kowalski. Kowalski put the initial data into scaling critical Besov space which is stronger than the scaling critical Sobolev space, and recovered the full dispersive estimate. We find this method also works for our model. \eqref{nls} is $\smash{\dot{\H}^{1/2}_x}$-critical. If we put the initial data in scaling critical Besov space $\smash{\dot{\B}^{1/2}_{2,1}}$ which is stronger than scaling critical Sobolev space $\smash{\dot{\H}^{1/2}_x}$, we can recover estimate \eqref{dispersive-estimate} for solutions to \eqref{nls}.
	
	Global well-posedness and scattering results in works \cite{D-2023} by Dodson, and in \cite{K-M-2010} by Kenig and Merle are also critical to this work.
	
	We summarize the well-posedness results that are needed:
	\begin{prop}[Theorem 1.1 in \cite{K-M-2010}]\label{wellposed-result}
		Suppose that $u(t)$ solves \eqref{nls} with initial data $u_0$. Assume that
		\begin{equation}\label{bdd-condition}
			\sup_{t\in (0,+\infty)}\|u(t)\|_{\dot{\H}^{1/2}_x} = A < +\infty.
		\end{equation}
		Then there exists $u_0^+$ such that
		\begin{equation}\label{linear-like}
			\lim_{t\to+\infty}\left\|u-\e^{\i t\Delta}u_0^+\right\|_{\dot{\H}^{1/2}_x} = 0,
		\end{equation}
		and
		\begin{equation}\label{scattering}
			\int_{0}^{+\infty}\int_{\mathbb{R}^3} |u(t,x)|^5\ \mathrm{d}x\mathrm{d}t\leq g(A),
		\end{equation}
		where $g:[0,\infty)\to [0,\infty)$ is an increasing function {\rm (c.f. corollary 5.3 in \cite{K-M-2010})}.
	\end{prop}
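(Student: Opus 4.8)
Since Proposition~\ref{wellposed-result} is cited essentially verbatim from \cite{K-M-2010} (the remaining point being supplied by \cite{D-2023}), I only sketch how one proves it, via the Kenig--Merle concentration--compactness/rigidity scheme. The scaling-invariant spacetime norm for \eqref{nls} is $\|u\|_{\L^5_{t,x}}$ --- indeed $\int\int|u|^5$ is exactly the scale-invariant quantity appearing in \eqref{scattering} --- and the cubic nonlinearity is handled by the corresponding Strichartz machinery. From this one obtains small-data global well-posedness and scattering in $\dot{\H}^{1/2}_x$, together with a stability lemma: a near-solution of \eqref{nls} with small error and finite $\L^5_{t,x}$ norm stays $\L^5_{t,x}$-close to a genuine solution. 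Consequently it is enough to prove the a priori implication
\[
\sup_{t\in(0,\infty)}\|u(t)\|_{\dot{\H}^{1/2}_x}\le A \ \Longrightarrow\ \|u\|_{\L^5_{t,x}((0,\infty)\times\mathbb{R}^3)}\le g(A)<\infty ,
\]
because finiteness of the critical spacetime norm upgrades, through Duhamel's formula and Strichartz, to the scattering statement \eqref{linear-like}, and monotonicity of $g$ is then immediate from the definition below.

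The core is the extraction of a minimal blow-up solution. Let $A_c$ be the supremum of the $A\ge0$ for which the displayed implication holds with some finite bound; small-data theory gives $A_c>0$ and the goal is $A_c=+\infty$. If $A_c<\infty$, choose solutions $u_n$ with $\sup_t\|u_n(t)\|_{\dot{\H}^{1/2}_x}\to A_c$ and $\|u_n\|_{\L^5_{t,x}}\to\infty$, apply the linear profile decomposition in $\dot{\H}^{1/2}_x(\mathbb{R}^3)$ to the initial data $u_n(0)$ --- whose parameters are scales, spatial centers and time shifts, pairwise asymptotically orthogonal --- evolve each profile nonlinearly, and use the stability lemma together with the spacetime decoupling of the nonlinear profiles to conclude that a single profile must carry the whole limiting norm; otherwise $u_n$ would, for large $n$, inherit the finite $\L^5_{t,x}$ norms of the decoupled pieces. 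This produces a critical element: a nonzero solution $u_c$ with $\sup_t\|u_c(t)\|_{\dot{\H}^{1/2}_x}=A_c$, infinite $\L^5_{t,x}$ norm forward in time, and parameters $\lambda(t)>0$, $x(t)\in\mathbb{R}^3$ such that the orbit $\{\lambda(t)\,u_c(t,\lambda(t)x+x(t))\}$, over the maximal forward interval of existence of $u_c$, is precompact in $\dot{\H}^{1/2}_x$.

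The rigidity step shows no such $u_c$ exists. After normalizing the symmetries one splits on the behaviour of $\lambda(t)$. In the soliton-like regime, where $\lambda(t)$ stays comparable to a constant, the compactness forces uniform spatial localization around $x(t)$, so a virial/interaction-Morawetz identity of Lin--Strauss type bounds a strictly positive functional that is forced to grow along a non-scattering orbit, which is impossible unless $u_c\equiv0$. In the frequency-escape regimes --- the low-to-high cascade, where $\lambda(t)\to\infty$ along a time sequence, and finite-time blow-up --- one first upgrades the compactness of the orbit to additional regularity and decay for $u_c$, in particular finiteness of the conserved mass $M[u_c]$, and then observes that the escape of $\lambda(t)$ drives this conserved mass to zero, again giving $u_c\equiv0$. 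Both alternatives contradict $u_c\not\equiv0$, so $A_c=+\infty$. To obtain the \emph{uniform} bound $g(A)$ in \eqref{scattering} one runs the profile-decomposition argument once more: if $\|u\|_{\L^5_{t,x}}$ were unbounded over solutions with $\sup_t\|u(t)\|_{\dot{\H}^{1/2}_x}\le A$, the same extraction would manufacture a critical element at a level $\le A<\infty=A_c$, a contradiction, and $g$ may then be taken increasing.

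The step I expect to be the main obstacle is the rigidity argument, and within it the frequency-escape alternatives: at the $\dot{\H}^{1/2}_x$-critical scaling there is no coercive conserved quantity --- the mass is subcritical and the energy supercritical relative to $\dot{\H}^{1/2}_x$, and in the generality of Proposition~\ref{wellposed-result} the data need not even lie in $\L^2_x$ --- so the elementary Morawetz/mass argument does not close on its own. Overcoming this requires frequency-localized interaction-Morawetz estimates, long-time Strichartz control, and the ``additional regularity'' machinery that propagates the compactness of the orbit into both high and low frequencies; this is precisely the analysis carried out in \cite{K-M-2010} and completed in \cite{D-2023}.
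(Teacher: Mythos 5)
The paper does not prove this proposition; it imports it verbatim from \cite{K-M-2010} (Theorem 1.1 there, with the uniform bound $g(A)$ coming from Corollary 5.3), and uses it as a black box. There is therefore no internal proof to compare against, and your decision to sketch the Kenig--Merle concentration--compactness/rigidity argument rather than reprove it is the right one. Your outline is accurate in broad strokes: $\L^5_{t,x}$ is the scaling-critical spacetime norm at $\dot{\H}^{1/2}_x$ regularity, small-data theory plus a stability lemma reduce matters to an a priori $\L^5_{t,x}$ bound, a linear profile decomposition extracts a minimal blow-up solution with almost-periodic orbit modulo scaling and translation, and rigidity rules it out.

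Two attributions should be corrected, though. First, \cite{D-2023} plays no role in this proposition: the Kenig--Merle result, including the increasing function $g$, is self-contained in \cite{K-M-2010}. Dodson's paper is cited in the present work only for the separate, unconditional radial result in $\dot{\B}^{2}_{1,1}$ underlying Theorem \ref{main-result-2}; it does not ``complete'' Proposition \ref{wellposed-result}, and presenting it that way misstates the logical dependency. Second, the rigidity step as you describe it (soliton vs.\ low-to-high cascade vs.\ finite-time blow-up, with the escape of $\lambda(t)$ driving the mass to zero) is organized along the ``three enemies'' lines of Killip--Tao--Visan and Dodson rather than Kenig--Merle's actual argument, which case-splits on the behaviour of the modulation parameter $\lambda(t)$ for the almost-periodic solution, upgrades its regularity and spatial decay so that conserved mass and a Lin--Strauss/Morawetz-type functional become available, and then derives the contradiction. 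You do flag that this regularity upgrade is the main obstacle, which is correct, but it is the crux of \cite{K-M-2010} rather than one branch of a trichotomy. Neither point affects the soundness of the present paper, which only needs the statement, but the references should point to the right places.
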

	\begin{rmk}\label{rmk-on-wp-1}
		Property \eqref{linear-like} is known as \textit{scattering}, which states that the solution $u$ to the non-linear equation asymptotically behaves like a linear solution. That the scattering norm \eqref{scattering} is finite is also very important to our analysis. In fact, for $(t,x)\in [0,+\infty)\times \mathbb{R}^3$, $\smash{\|u\|_{\L^5_{t,x}}}<+\infty$ implies that
		$$\||\nabla|^{1/2}u\|_{\L^p_t\L^q_x}\leq C(\|u_0\|_{\dot{\H}^{1/2}_x}, \|u\|_{\L^5_{t,x}})$$
		where $(p,q)$ are Schr\"odinger admissible pairs. One can prove it for $(p,q) = (5,30/11)$, then generalize the estimates to all Schr\"odinger admissible pairs by Strichartz estimates
		$$\||\nabla|^{1/2}u\|_{\L^p_t\L^q_x} \lesssim \|u_0\|_{\dot{\H}^{1/2}_x} + \|u\|^2_{\L^5_{t,x}}\||\nabla|^{1/2}u\|_{\L^5_t\L^{30/11}_x}$$
		and standard bootstrap argument.
	\end{rmk}
	
	Recall the following embedding relations:
	\begin{equation}\label{embedding}
		\dot{\B}^{2}_{1,1}(\mathbb{R}^3)\hookrightarrow \dot{\B}^{1/2}_{2,1}(\mathbb{R}^3)\hookrightarrow \dot{\H}^{1/2}_x(\mathbb{R}^3).
	\end{equation}
	It turns out that placing the initial data in a stronger space makes it easier to obtain the full dispersive estimate.
		
	Based on the results by Kenig and Merle, we prove the following:
	\begin{thm}\label{main-result}
		Let $u(t)$ solve the Cauchy problem \eqref{nls} with initial data $u_0$. Assume that
		$$\sup_{t\in(0,+\infty)}\|u(t)\|_{\dot{\H}^{1/2}_x} < +\infty,$$
		and $\|u_0\|_{\dot{\B}^{1/2}_{2,1}}\leq R_0$. Then there exists a constant $C(\|u\|_{\L^5_{t,x}},R_0)>0$ such that
		\begin{equation}\label{main-est}
			\|u(t,x)\|_{\L^{\infty}_x}\leq C(\|u\|_{\L^5_{t,x}},R_0)t^{-3/2}\|u_0\|_{\L^1_x}
		\end{equation}
		holds for any $t>0$.
	\end{thm}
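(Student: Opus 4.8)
We outline a possible approach: a bootstrap on the quantity
\[
\Lambda(T):=\sup_{0<t\le T}t^{3/2}\|u(t)\|_{\L^{\infty}_x},
\]
built on three ingredients — the dispersive estimate \eqref{dispersive-estimate}, conservation of mass, and propagation of the critical Besov regularity. First, since $\dot{\H}^{1/2}_x(\mathbb R^3)\hookrightarrow\L^3_x$ and $\L^1_x\cap\L^3_x\subset\L^2_x$, the datum lies in $\L^2_x$, so $\|u(t)\|_{\L^2_x}=\|u_0\|_{\L^2_x}$ for all $t$. Next, running Strichartz estimates in the Chemin--Lerner spaces $\widetilde{\L}^p_t\dot{\B}^{1/2}_{q,1}$, using the finiteness of $\|u\|_{\L^5_{t,x}}$ from Proposition \ref{wellposed-result} together with a partition of $[0,\infty)$ into finitely many intervals on which $\|u\|_{\L^5_{t,x}}$ is small, one propagates $\sup_{t\ge0}\|u(t)\|_{\dot{\B}^{1/2}_{2,1}}+\|u\|_{\widetilde{\L}^p_t\dot{\B}^{1/2}_{q,1}([0,\infty))}\le C(\|u\|_{\L^5_{t,x}},R_0)$ for all admissible $(p,q)$; in particular $u\in\widetilde{\L}^2_t\dot{\B}^{1/2}_{6,1}\hookrightarrow\L^2_t\L^{\infty}_x$, and the high-frequency tails obey $\sum_{N'\ge N}\|P_{N'}u(s)\|_{\L^2_x}\lesssim N^{-1/2}C(\|u\|_{\L^5_{t,x}},R_0)$ uniformly in $s$. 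A short-time argument (using $u\in\L^2_t\L^{\infty}_x$ locally and \eqref{dispersive-estimate} for the linear part) shows $\Lambda(T)<\infty$ for every finite $T$.

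Write Duhamel's formula $u(t)=\e^{\i t\Delta}u_0-\i\int_0^t\e^{\i(t-s)\Delta}(|u|^2u)(s)\,\mathrm{d}s$ and split the integral at $s=t/2$. The linear term is controlled by \eqref{dispersive-estimate}. On $[0,t/2]$ one has $t-s\ge t/2$, so by \eqref{dispersive-estimate} the corresponding piece is $\lesssim t^{-3/2}\int_0^{t/2}\|u(s)\|_{\L^3_x}^3\,\mathrm{d}s$. By $\dot{\H}^{1/2}_x\hookrightarrow\L^3_x$, mass conservation and interpolation, $\|u(s)\|_{\L^3_x}^3\lesssim\min\big(\|u(s)\|_{\dot{\H}^{1/2}_x}^3,\ \|u_0\|_{\L^2_x}^2\|u(s)\|_{\L^{\infty}_x}\big)$; inserting $\|u(s)\|_{\L^{\infty}_x}\le\Lambda(T)s^{-3/2}$ and optimizing over the crossover time yields $\int_0^{t/2}\|u(s)\|_{\L^3_x}^3\,\mathrm{d}s\le C(\|u_0\|_{\dot{\H}^{1/2}_x},\|u_0\|_{\L^2_x})\,\Lambda(T)^{2/3}$, so this contribution to $t^{3/2}\|u(t)\|_{\L^{\infty}_x}$ is $\lesssim\Lambda(T)^{2/3}$, i.e.\ sub-linear in $\Lambda(T)$.

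The piece on $[t/2,t]$ is the delicate one, and it is here that the Besov hypothesis is genuinely needed, since $\e^{\i(t-s)\Delta}$ no longer disperses. I would decompose $|u|^2u$ into Littlewood--Paley pieces and split at the frequency $N\sim t^{-1/2}$. For $N\lesssim t^{-1/2}$ one uses $\|\e^{\i\tau\Delta}P_N g\|_{\L^{\infty}_x}\lesssim\min(N^3,\tau^{-3/2})\|P_N g\|_{\L^1_x}$ together with $\|P_N(|u|^2u)(s)\|_{\L^1_x}\lesssim\|u(s)\|_{\L^3_x}^3$; after the same interpolation and optimization as above this again produces a contribution $\lesssim\Lambda(T)^{2/3}$. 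For $N\gtrsim t^{-1/2}$, however, a naive Bernstein estimate would cost a factor $N^{3/2}$, which the gain $N^{-1/2}$ coming from the $\dot{\B}^{1/2}_{2,1}$ tail bound cannot absorb in the $N$-summation; one therefore has to exploit the oscillation of $\e^{\i(t-s)\Delta}$ in the time variable, e.g.\ by passing to the interaction picture and integrating by parts in $s$ (a normal-form manipulation, treated with extra care in the region of small time resonances), so as to convert the temporal oscillation into the missing power of $N$. This resonant high-frequency estimate is the step I expect to be the main obstacle.

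Collecting the estimates one arrives at an inequality $\Lambda(T)\le C_0\|u_0\|_{\L^1_x}+C_1\Lambda(T)^{2/3}$, with $C_1$ (from the far and resonant-low-frequency pieces, and conjecturally the resonant high-frequency piece as well) of the form $C_1\sim C(\|u\|_{\L^5_{t,x}},R_0)\,\|u_0\|_{\L^2_x}^{4/3}$; the Gagliardo--Nirenberg inequality $\|u_0\|_{\L^2_x}\lesssim\|u_0\|_{\L^1_x}^{1/4}\|u_0\|_{\dot{\H}^{1/2}_x}^{3/4}$ together with $\sup_t\|u(t)\|_{\dot{\H}^{1/2}_x}\le C(\|u\|_{\L^5_{t,x}},R_0)$ lets one rewrite this so that $C_1^3\lesssim C(\|u\|_{\L^5_{t,x}},R_0)\|u_0\|_{\L^1_x}$, keeping the final bound linear in $\|u_0\|_{\L^1_x}$. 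Since $\Lambda(T)<\infty$, Young's inequality absorbs the $\Lambda(T)^{2/3}$ term and gives $\Lambda(T)\le C(\|u\|_{\L^5_{t,x}},R_0)\,\|u_0\|_{\L^1_x}$ uniformly in $T$, which is exactly \eqref{main-est}.
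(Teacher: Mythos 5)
Your far-in-time piece ($s\in[0,t/2]$), mass-conservation interpolation, and Besov propagation all match the paper's structure (Proposition \ref{Besov-spacetime-bounds}). But there are two genuine gaps.

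\textbf{The near-in-time piece does not require normal forms.} Where you split the frequency sum at $N\sim t^{-1/2}$ and flag the high-frequency range as an obstacle calling for integration by parts in $s$, the paper instead introduces a frequency-dependent parameter $B$ and further splits $\int_{t/2}^{t}$ at $s=(t-B)\vee t/2$. On $[t/2,(t-B)\vee t/2]$ one still has $|t-s|\ge B$, so the dispersive estimate contributes $\int |t-s|^{-3/2}\,\mathrm{d}s\lesssim B^{-1/2}$; on $[(t-B)\vee t/2,t]$ Bernstein in the output frequency $N$ and the mid frequency $N_2$ contributes $B\,N^{3/2}N_2^{3/2}$. Choosing $B=(N N_2)^{-1}$ balances these to $(N N_2)^{1/2}$, which is exactly what the $\ell^1$ Besov sum $\sum_N N^{1/2}\|u_N\|_{\L^\infty_t\L^2_x}$ from Proposition \ref{Besov-spacetime-bounds} absorbs after the paraproduct sum $\sum_{N\lesssim N_1}$. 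No oscillatory/normal-form argument is needed; your suggested route is both harder and not what closes the argument.

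\textbf{The bootstrap does not close for large $R_0$ without an induction step.} You claim the near-in-time contribution is $\lesssim\Lambda(T)^{2/3}$, but in the actual estimate (cf.~\eqref{F2-term-2-est}--\eqref{F2-term-1-est}) one substitutes $\|u(s)\|_{\L^\infty_x}\le C_0\,s^{-3/2}\|u_0\|_{\L^1_x}$ so the bound on $\|F_2\|_{\L^\infty_x}$ is \emph{linear} in the bootstrap constant $C_0$, with prefactor proportional to $\bigl(\sum_N N^{1/2}\|u_N\|_{\L^\infty_t\L^2_x}\bigr)^2\lesssim C(\|u\|_{\L^5_{t,x}})^2\|u_0\|_{\dot{\B}^{1/2}_{2,1}}^2$. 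This can only be made $\le\tfrac{1}{10}$ when the Besov norm of $u_0$ is small, which is why the paper first proves a small-data base case (Corollary \ref{base-cor}) and then performs an $\varepsilon$-increment induction in $R_0$: decompose $u_0=v_0+w_0$ with $\|v_0\|_{\dot{\B}^{1/2}_{2,1}}\le R_0+k\varepsilon$ and $\|w_0\|_{\dot{\B}^{1/2}_{2,1}}\le\varepsilon$, produce the nearby solution $v$ via the perturbation theorem (Proposition \ref{perturbation-thm}), apply the induction hypothesis to $v$, and bootstrap $w=u-v$ using the $\dot{\B}^{1/2}_{2,1}$-stability estimate (Proposition \ref{B-stability}) to gain smallness in the $w$-quadratic and $w$-cubic terms. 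Your proposal omits this entire mechanism, and Young's inequality cannot substitute for it because the term in question is not sub-linear in $\Lambda$.

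Related smaller inaccuracies: your crude $\min(N^3,\tau^{-3/2})$ low-frequency bound gives $\sum_{N\lesssim t^{-1/2}}N\sim t^{-1/2}$ times $\|u(s)\|_{\L^3_x}^3$, which upon inserting the bootstrap bound is again linear in $\Lambda$, not $\Lambda^{2/3}$; and the Chemin--Lerner framework you invoke is not needed, as the paper works directly with $\sum_N N^{1/2}\|u_N\|_{\L^p_t\L^q_x}$.
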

	
	In \cite{D-2023}, Dodson established the global well-posedness and scattering results for radially symmetric initial data in the homogeneous Besov space $\dot{\B}^{2}_{1,1}(\mathbb{R}^3)$:
	\begin{prop}[Theorem 2 in \cite{D-2023}]
		If $u_0\in \smash{\dot{\B}^{2}_{1,1}(\mathbb{R}^3)}$ and $u_0$ is radially symmetric, then \eqref{nls} has a global solution $u(t)$ with initial data $u_0$ and there exists $u^{+}_0\in \smash{\dot{\H}^{1/2}_x(\mathbb{R}^3)}$ such that
		\begin{equation}\label{linear-like-2}
			\lim_{t\to+\infty}\left\|u-\e^{\i t\Delta}u_0^+\right\|_{\dot{\H}^{1/2}_x} = 0.
		\end{equation}
		And if $\|u_0\|_{\dot{\B}^{2}_{1,1}}\leq R_0$, there exists a function $f:[0,\infty)\to [0,\infty)$ such that
		\begin{equation}\label{scattering-2}
			\int_{0}^{+\infty}\int_{\mathbb{R}^3} |u(t,x)|^5\ \mathrm{d}x\mathrm{d}t\leq f(R_0).
		\end{equation}
	\end{prop}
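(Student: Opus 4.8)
The plan is to reduce the entire statement to the claim that, for radial initial data, the maximal-lifespan solution of \eqref{nls} is global and obeys
\[
\sup_{t}\|u(t)\|_{\dot{\H}^{1/2}_x}\ \le\ A(R_0)\ <\ \infty ,
\]
with $A$ depending only on $R_0=\|u_0\|_{\dot{\B}^2_{1,1}}$ (and, by scale invariance, on nothing else); here one uses that $\dot{\B}^2_{1,1}(\mathbb{R}^3)\hookrightarrow\dot{\H}^{1/2}_x(\mathbb{R}^3)$ by \eqref{embedding}. Granting this, Proposition \ref{wellposed-result} applies with $A=A(R_0)$ and yields both \eqref{linear-like-2} --- which is exactly \eqref{linear-like} --- and \eqref{scattering-2} with $f(R_0)=g(A(R_0))$ from \eqref{scattering}. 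So everything comes down to global existence together with the a priori critical-norm bound for radial data, and I would obtain both simultaneously by a concentration-compactness/rigidity argument.

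The obstruction is that \eqref{nls} is $\dot{\H}^{1/2}_x$-critical but has no conserved --- let alone coercive --- quantity at or below $\dot{\H}^{1/2}_x$; the supercritical, $\L^1$-based Besov datum and the radial symmetry are precisely what replace the missing conservation law. I would argue by contradiction: if the conclusion fails, a standard minimality argument (induction on the scattering size, together with a linear profile decomposition --- which in the radial setting carries only scaling and time-translation parameters, no spatial translations or Galilean boosts) produces a nonzero solution $u_c$, radial, with $\|u_c(0)\|_{\dot{\B}^2_{1,1}}$ at the critical level, that is \emph{almost periodic modulo scaling}: there is $N:I\to(0,\infty)$ so that $\{N(t)^{-1/2}u_c(t,\,\cdot/N(t))\}_{t\in I}$ is precompact in $\dot{\H}^{1/2}_x$, and whose scattering norm is infinite on its maximal interval $I$. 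The key new ingredient, beyond \cite{K-M-2010}, is that the $\dot{\B}^2_{1,1}$ regularity \emph{propagates} on any subinterval where the scattering norm is finite, via a frequency-localized Strichartz estimate and a fractional-Leibniz/Gronwall bootstrap; this rules out a finite-time endpoint of $I$ (so the solution is global) and prevents $N(t)$ from escaping to $+\infty$, so after the usual normalization one is reduced to $I=[0,\infty)$ with either $N(t)\equiv1$ or $N(t)\to0$ as $t\to\infty$.

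The two surviving scenarios are killed by Morawetz-type estimates. For the soliton-like case $N(t)\equiv1$ I would use the scale-invariant Lin--Strauss (one-particle) Morawetz inequality, which for the $3$D cubic defocusing equation reads
\[
\int_{0}^{T}\!\!\int_{\mathbb{R}^3}\frac{|u(t,x)|^{4}}{|x|}\,\mathrm{d}x\,\mathrm{d}t\ \lesssim\ \sup_{t\in[0,T]}\|u(t)\|_{\dot{\H}^{1/2}_x}^{2}\, ;
\]
for a \emph{radial} solution the concentration is pinned at the origin, so almost periodicity forces the left side to grow at least linearly in $T$, contradicting the finite right side. For the cascade case $N(t)\to0$ I would run the double-Duhamel argument: expanding $u_c$ by Duhamel's formula in both time variables and exploiting the $t^{-3/2}$ dispersive decay of $\e^{\i t\Delta}$ on the scale-invariant spaces adapted to the Besov datum shows $u_c\in\dot{\H}^{s}_x$ for some $s>1/2$, which together with $N(t)\to0$ forces $\|u_c\|_{\dot{\H}^{1/2}_x}\to0$ and hence $u_c\equiv0$ --- again a contradiction.

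The main obstacle is the propagation of the $\dot{\B}^2_{1,1}$ regularity and its quantitative coupling with the Morawetz and double-Duhamel bounds: it is this, rather than any conservation law, that confines the frequency scale $N(t)$ and makes the rigidity step work, and it is genuinely sensitive both to the $\L^1$-based nature of the Besov space (so that $\e^{\i t\Delta}u_0$ enjoys the full $t^{-3/2}$ dispersion and summable Strichartz tails) and, at the Morawetz step, to radiality. Once the qualitative scheme is in place, tracking every estimate as an explicit function of $R_0$ --- so that $f$ in \eqref{scattering-2} is well defined --- is bookkeeping.
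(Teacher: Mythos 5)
This Proposition is not proved in the paper at all: it is imported verbatim from Dodson's work, exactly as the header ``Theorem 2 in \cite{D-2023}'' indicates, and the paper immediately moves on to use it as a black box (see Remark~\ref{rmk-on-wp-2} and the subsequent proof of Theorem~\ref{main-result-2}). So there is no in-paper proof for your sketch to be compared against; the expected ``proof'' here is simply the citation.

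Taking your sketch on its own terms as a reconstruction of Dodson's argument: the top-level reduction is sound. If one can show $\sup_t\|u(t)\|_{\dot{\H}^{1/2}_x}\le A(R_0)$ for radial $\dot{\B}^2_{1,1}$ data (using $\dot{\B}^2_{1,1}\hookrightarrow\dot{\H}^{1/2}_x$ from \eqref{embedding} for the initial norm), then Proposition~\ref{wellposed-result} immediately delivers both \eqref{linear-like-2} and \eqref{scattering-2} with $f=g\circ A$. But the machinery you propose for establishing that a priori bound -- profile decomposition, minimal almost-periodic blowup solution, soliton versus cascade dichotomy, Lin--Strauss Morawetz for the soliton case, double Duhamel for the cascade -- is the Kenig--Merle/Killip--Visan template and, as far as I can tell, is \emph{not} how Dodson proceeds in \cite{D-2023}. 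Dodson's argument is direct rather than by contradiction: he leverages the $\L^1$-based structure of $\dot{\B}^2_{1,1}$ (each Littlewood--Paley piece of the data lies in $\L^1$ and enjoys the full $t^{-3/2}$ dispersive decay) together with long-time Strichartz estimates and a frequency-localized interaction Morawetz estimate adapted to radial data, and closes a bootstrap on the scattering norm without ever constructing a minimal counterexample. Your route would also have to address two substantive technical points that you currently defer to ``bookkeeping'': (i) the radial profile decomposition at $\dot{\H}^{1/2}$ regularity still carries a scaling parameter, and showing the limiting profiles inherit $\dot{\B}^2_{1,1}$ control (so that your ``propagation of Besov regularity'' argument even applies to $u_c$) is not automatic, since $\dot{\B}^2_{1,1}$ is not weakly closed under the relevant limits; and (ii) the scale-invariant Morawetz inequality you write requires the bound $\bigl|\int \tfrac{x}{|x|}\cdot\mathrm{Im}(\bar u\nabla u)\,\mathrm{d}x\bigr|\lesssim\|u\|_{\dot{\H}^{1/2}_x}^2$, which is true but needs justification (a commutator/half-derivative argument), and on its own the resulting spacetime bound does not contradict almost periodicity without the extra regularity you are simultaneously trying to establish. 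None of this is fatal to your outline as a \emph{possible} proof strategy, but it is a genuinely different route from Dodson's, and since the paper does not prove the statement, the honest answer is that one should simply cite \cite{D-2023}.
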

	Based on this result, in a same way as we prove the theorem \ref{main-result}, we can obtain the following unconditional result:
	\begin{thm}\label{main-result-2}
		Assume that $u_0\in \dot{\B}^{2}_{1,1}(\mathbb{R}^3)\cap \L^1_x(\mathbb{R}^3)$ is radially symmetric and $\smash{\|u_0\|_{\dot{\B}^{2}_{1,1}}}\leq R_0$ and $u(t)$ is the corresponding global solution to the Cauchy problem \eqref{nls} with initial data $u_0$. Then there exists a constant $C(R_0)>0$ such that
		\begin{equation}\label{main-est-2}
			\|u(t,x)\|_{\L^{\infty}_x}\leq C(R_0)t^{-3/2}\|u_0\|_{\L^1_x}
		\end{equation}
		holds for any $t>0$.
	\end{thm}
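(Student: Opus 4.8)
The plan is to obtain Theorem \ref{main-result-2} as a corollary of Theorem \ref{main-result} together with the global well-posedness/scattering statement of \cite{D-2023} recalled above: in the radial $\dot{\B}^2_{1,1}$ regime that statement furnishes precisely the a priori information required to apply Theorem \ref{main-result}, and with all constants controlled by $R_0$ alone. (Equivalently, one may re-run the proof of Theorem \ref{main-result} verbatim, replacing the inputs coming from Proposition \ref{wellposed-result} by those of \cite{D-2023}.)

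First I would collect the two inputs. By \cite{D-2023}, $u$ is a global solution and, since $\|u_0\|_{\dot{\B}^2_{1,1}}\le R_0$, its scattering norm obeys $\|u\|_{\L^5_{t,x}}^5\le f(R_0)$ by \eqref{scattering-2}. Next, feeding this bound into the Strichartz/bootstrap scheme recalled in Remark \ref{rmk-on-wp-1} and using the admissible pair $(p,q)=(\infty,2)$ yields
\begin{equation*}
\sup_{t\in(0,\infty)}\|u(t)\|_{\dot{\H}^{1/2}_x}=\bigl\||\nabla|^{1/2}u\bigr\|_{\L^\infty_t\L^2_x}\le C\bigl(\|u_0\|_{\dot{\H}^{1/2}_x},\ \|u\|_{\L^5_{t,x}}\bigr).
\end{equation*}
By the embedding \eqref{embedding} we have $\|u_0\|_{\dot{\B}^{1/2}_{2,1}}\le\|u_0\|_{\dot{\B}^2_{1,1}}\le R_0$, hence $\|u_0\|_{\dot{\H}^{1/2}_x}\lesssim R_0$ and the right-hand side above is a function of $R_0$ only; in particular $\sup_{t}\|u(t)\|_{\dot{\H}^{1/2}_x}<\infty$ and $u_0\in\dot{\H}^{1/2}_x\cap\L^1_x$, so every hypothesis of Theorem \ref{main-result} is met.

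I would then apply Theorem \ref{main-result} with Besov radius $R_0$ (legitimate by \eqref{embedding}) to get $\|u(t)\|_{\L^\infty_x}\le C(\|u\|_{\L^5_{t,x}},R_0)\,t^{-3/2}\|u_0\|_{\L^1_x}$ for every $t>0$. Finally, since $\|u\|_{\L^5_{t,x}}\le f(R_0)^{1/5}$ and the constant in \eqref{main-est} is (by its construction in the proof of Theorem \ref{main-result}) monotone in the scattering norm, I replace $C(\|u\|_{\L^5_{t,x}},R_0)$ by $C(R_0):=C\bigl(f(R_0)^{1/5},R_0\bigr)$, which is exactly \eqref{main-est-2}.

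There is no serious analytic obstacle here: the dispersive and bootstrap estimates are all carried out inside the proof of Theorem \ref{main-result}. The only points requiring care are bookkeeping ones — checking that every quantity on which the constant in \eqref{main-est} depends (the scattering norm $\|u\|_{\L^5_{t,x}}$, the Besov radius $\|u_0\|_{\dot{\B}^{1/2}_{2,1}}$, and implicitly $\sup_t\|u(t)\|_{\dot{\H}^{1/2}_x}$) is majorized by a function of $R_0$, which is precisely what \eqref{scattering-2} and \eqref{embedding} provide — together with noting that the solution produced by \cite{D-2023} agrees, by unconditional uniqueness in the relevant Strichartz class, with the Cauchy solution of \eqref{nls} to which Theorem \ref{main-result} is applied.
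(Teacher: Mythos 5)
Your argument is correct, and it takes a genuinely different route from the paper's. The paper does not invoke Theorem~\ref{main-result} as a black box; instead it re-runs the whole base-case/induction machinery in the radial $\dot{\B}^2_{1,1}$ setting, recording the parallel versions of the Besov spacetime and stability bounds (Propositions~\ref{Besov-spacetime-bounds-2} and~\ref{B-stability-2}), establishing $\||\nabla|^{1/2}u\|_{\L^p_t\L^q_x}\le C(R_0)$ directly from \eqref{scattering-2} and Strichartz, and observing that one may dispense with the perturbation theorem of Proposition~\ref{perturbation-thm} entirely: in the decomposition $u_0=v_0+w_0$ of \eqref{decomposition}, the bound $\|v_0\|_{\dot{\B}^2_{1,1}}\le R_0+k\varepsilon\le R_0+1$ already lets one invoke Dodson's global result to produce $v$ with $\|v\|_{\L^5_{t,x}}+\||\nabla|^{1/2}v\|_{\L^p_t\L^q_x}\le C(R_0)$, which is all the induction step requires.

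Your approach is more modular: you verify every hypothesis of Theorem~\ref{main-result} from Dodson's theorem, the embedding $\dot{\B}^2_{1,1}\hookrightarrow\dot{\B}^{1/2}_{2,1}\hookrightarrow\dot{\H}^{1/2}_x$, and the Strichartz bootstrap of Remark~\ref{rmk-on-wp-1}, and then note that the resulting constant $C(\|u\|_{\L^5_{t,x}},R_0)$ collapses to a function of $R_0$ alone since $\|u\|_{\L^5_{t,x}}\le f(R_0)^{1/5}$. This is logically sound, provided (as you flag) that the constant in \eqref{main-est} is at least locally bounded as a function of the scattering norm --- which a glance at the proof of Theorem~\ref{main-result} confirms, since every quantity entering it (the interval count $J$, the constants from Proposition~\ref{perturbation-thm}, the Besov and stability constants of Propositions~\ref{Besov-spacetime-bounds} and~\ref{B-stability}) is nondecreasing in the relevant norms. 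Two minor bookkeeping points worth stating explicitly: the embedding \eqref{embedding} is only a $\lesssim$, so one should apply Theorem~\ref{main-result} with Besov radius $CR_0$ rather than $R_0$ (immaterial for the conclusion); and one should appeal to uniqueness in the Strichartz class to identify the solution produced by Dodson's theorem with the Cauchy solution to which Theorem~\ref{main-result} is applied, as you correctly note. The trade-off: your corollary-style deduction is shorter, while the paper's re-derivation buys a self-contained argument that avoids the perturbation theorem in the radial Besov regime and furnishes the parallel propositions that mirror the corresponding statements in \cite{K-2025}.
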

	\begin{rmk}\label{rmk-on-wp-2}
		Since the global well-posedness and scattering results are established, we do not need to assume the boundedness of the solutions. In fact by \eqref{scattering-2} and Strichartz estimates, we can obtain, for Schr\"odinger admissible pairs $(p,q)$,
		$$\||\nabla|^{1/2}u\|_{\L^p_t\L^q_x([0,+\infty)\times \mathbb{R}^3)}\leq C(R_0).$$
		This theorem can be viewed as a parallel version of theorem $1.3$ in \cite{K-2025}.
	\end{rmk}
	
	\subsection{Auxiliary Results}
	We also summarize some well known results in the theory of dispersive equations that are crucial to our analysis.
	
	\begin{prop}[Perturbation theorem c.f. \cite{K-M-2010}]\label{perturbation-thm}
		Let $\I\subset \mathbb{R}$ be a time interval, and let $t_0\in \I$. Let $u$ be defined on $\mathbb{R}^3\times \I$ such that $\smash{\sup_{t\in\I}\|u(t)\|_{\dot{\H}^{1/2}_x}}\leq A$, $\smash{\|u\|_{\L^5_{t,x}(\I)}}\leq M$ and $\smash{\||\nabla|^{1/2}u\|_{\L^5_t\L^{30/11}_x(\I)}}<+\infty$ for some constants $M,A>0$. Assume that $u$ solves
		$$\i\partial_t u + \Delta u - |u|^2u = e,\quad (x,t)\in \mathbb{R}^3\times \I,$$
		and let $v_0\in \dot{\H}^{1/2}_x$ be such that $\|v_0 - u(t_0)\|_{\dot{\H}^{1/2}_x}\leq A'$.
		
		Then there exists $\varepsilon_0 = \varepsilon_0(M,A,A')>0$ such that if $0<\varepsilon\leq \varepsilon_0$ and
		$$\||\nabla|^{1/2}e\|_{\L^{5/3}_t\L^{30/23}_x(\I)}\leq \varepsilon,\quad \|\e^{\i(t-t_0)\Delta}[v_0-u(t_0)]\|_{\L^5_{t,x}(\I)}\leq \varepsilon,$$
		then there exists a unique solution $v(t)$ of \eqref{nls} on $\mathbb{R}^3\times \I$ such that $v\vert_{t=t_0} = v_0$ and
		$$\smash{\|v\|_{\L^5_{t,x}(\I)}\leq C(A,A',M),\quad \|v-u\|_{\L^5_{t,x}(\I)}}\leq C(A,A',M)(\varepsilon+\varepsilon'),$$
		$$\sup_{t\in \I}\|v(t)-u(t)\|_{\dot{\H}^{1/2}_x} + \||\nabla|^{1/2}(v-u)\|_{\L^5_t\L^{30/11}_x(\I)}\leq C(A,A',M)(A'+\varepsilon+\varepsilon'),$$
		where $\varepsilon' = \varepsilon^{\beta}$ for some $\beta>0$.
	\end{prop}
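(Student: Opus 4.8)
The plan is to read Proposition~\ref{perturbation-thm} as a short-time-iteration stability argument adapted to the $\dot{\H}^{1/2}_x$-critical scaling. Writing $N(z):=|z|^2 z$ and $w:=v-u$, the difference $w$ should solve $\i\partial_t w+\Delta w = N(u+w)-N(u)-e$ with $w(t_0)=v_0-u(t_0)$, hence by Duhamel's formula
\[
w(t)=\e^{\i(t-t_0)\Delta}\big(v_0-u(t_0)\big)-\i\int_{t_0}^{t}\e^{\i(t-s)\Delta}\big(N(u+w)-N(u)-e\big)(s)\,\mathrm{d}s .
\]
The whole argument then reduces to bounding $w$ in the critical norms $\|w\|_{\L^5_{t,x}(\I)}$, $\||\nabla|^{1/2}w\|_{\L^5_t\L^{30/11}_x(\I)}$ and $\sup_{t\in\I}\|w(t)\|_{\dot{\H}^{1/2}_x}$.

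First I would partition $\I$ into finitely many consecutive subintervals $\I_1,\dots,\I_J$ chosen so that $\|u\|_{\L^5_{t,x}(\I_j)}\le\eta$ and $\||\nabla|^{1/2}u\|_{\L^5_t\L^{30/11}_x(\I_j)}\le\eta$ for a small parameter $\eta=\eta(M,A,A')$; this is possible precisely because both norms are finite on all of $\I$ by hypothesis, and the number $J$ then depends only on $M$, $A$, $A'$. On a fixed $\I_j=[\tau_{j-1},\tau_j]$ I would apply the Strichartz estimates (for the exponent pairs already appearing in the statement) to the Duhamel representation of $w$ and bound the nonlinear difference $N(u+w)-N(u)=O\big((|u|+|w|)^2|w|\big)$ via the fractional Leibniz and chain rules, so that $\big\||\nabla|^{1/2}(N(u+w)-N(u))\big\|_{\L^{5/3}_t\L^{30/23}_x(\I_j)}$ is a sum of finitely many terms, each carrying at least one factor $\|u\|_{\L^5_{t,x}(\I_j)}+\|w\|_{\L^5_{t,x}(\I_j)}\le\eta+\|w\|_{\L^5_{t,x}(\I_j)}$ and the remaining factors among $\|u\|_{\L^5_{t,x}(\I_j)}$, $\||\nabla|^{1/2}u\|_{\L^5_t\L^{30/11}_x(\I_j)}$, $\|w\|_{\L^5_{t,x}(\I_j)}$ and $\||\nabla|^{1/2}w\|_{\L^5_t\L^{30/11}_x(\I_j)}$. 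Combining this with the hypothesis $\||\nabla|^{1/2}e\|_{\L^{5/3}_t\L^{30/23}_x(\I_j)}\le\varepsilon$, once $\eta$ and $\varepsilon$ are small enough a contraction-mapping/bootstrap argument produces a solution $w$ on $\I_j$ with
\[
\|w\|_{\L^5_{t,x}(\I_j)}+\||\nabla|^{1/2}w\|_{\L^5_t\L^{30/11}_x(\I_j)}+\sup_{t\in\I_j}\|w(t)\|_{\dot{\H}^{1/2}_x}\lesssim \|\e^{\i(t-\tau_{j-1})\Delta}w(\tau_{j-1})\|_{\L^5_{t,x}(\I_j)}+\|w(\tau_{j-1})\|_{\dot{\H}^{1/2}_x}+\varepsilon .
\]

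Next I would run the induction on $j$: the quantities at the new endpoint $\tau_j$, namely $\|w(\tau_j)\|_{\dot{\H}^{1/2}_x}$ and $\|\e^{\i(t-\tau_j)\Delta}w(\tau_j)\|_{\L^5_{t,x}(\I_{j+1})}$, are bounded by the corresponding input at $\tau_{j-1}$ with an absolute multiplicative constant per step. Since there are only $J=J(M,A,A')$ steps, the accumulated constant depends only on $M$, $A$, $A'$; this also forces $\varepsilon_0$ to be chosen in terms of $J$, and the cost of propagating the smallness through $J$ inductive steps is exactly the slightly degraded power $\varepsilon'=\varepsilon^{\beta}$, $\beta=\beta(J)>0$, that appears in the conclusion. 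Patching the pieces $w|_{\I_j}$ gives a global $v=u+w$ on $\I$ with $v|_{t=t_0}=v_0$, and uniqueness is part of the same fixed-point statement.

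I expect the main obstacle to be this inductive step across the partition rather than the single-interval estimate. The delicate point is that the smallness genuinely required to close the contraction on $\I_{j+1}$ — in particular smallness of $\|\e^{\i(t-\tau_j)\Delta}w(\tau_j)\|_{\L^5_{t,x}(\I_{j+1})}$, which is not hypothesized directly but only inherited from the previous stage — must still be available after $j$ iterations, so one has to track carefully how every constant, and the exponent $\beta$, depends on $J$ and hence ultimately on $M$, $A$, $A'$. The fractional-derivative nonlinear estimates themselves are routine consequences of the fractional Leibniz and chain rules, and the full details can be found in \cite{K-M-2010}.
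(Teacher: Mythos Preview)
The paper does not supply its own proof of Proposition~\ref{perturbation-thm}; it simply quotes the result from \cite{K-M-2010} as an auxiliary input. Your sketch is the standard short-time/long-time stability argument at $\dot{\H}^{1/2}_x$ regularity (partition into subintervals with small $\L^5_{t,x}$ and $\||\nabla|^{1/2}u\|_{\L^5_t\L^{30/11}_x}$ norms, Strichartz plus fractional Leibniz/chain rules, contraction on each piece, then induct across the partition with constants depending on $J=J(M,A,A')$), which is exactly the argument in the cited reference, so there is nothing to compare and your outline is correct.
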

	
	We may also use the Strichartz estimate for Schr\"odinger in our analysis:
	\begin{prop}[Strichartz estimate for Schr\"odinger, $d=3$, c.f. \cite{G-V-1989,K-T-1998,Y-1987}]\label{strichartz-est}
		Call a pair $(p,q)$ of Schr\"odinger admissible for $d=3$ if $2\leq p,q\leq \infty$ and $\frac{2}{p}+\frac{3}{q} = \frac{3}{2}$. Then for any admissible pair $(p,q)$ and $(\tilde{p},\tilde{q})$, we have homogeneous Strichartz estimate
		\begin{equation}\label{homogeneous-Strichartz-estimate}
			\|\e^{\i t\Delta}u_0\|_{\L^p_t\L^q_x(\mathbb{R}\times \mathbb{R}^3)}\lesssim_{p,q}\|u_0\|_{\L^2_x(\mathbb{R}^3)},
		\end{equation}
		the dual homogeneous Strichartz estimate
		\begin{equation}\label{dual-homogeneous-Strichartz-estimate}
			\left\|\int_{\mathbb{R}}\e^{-\i s\Delta}F(s)\ \mathrm{d}s\right\|_{\L^2_x(\mathbb{R}^3)}\lesssim_{\tilde{p},\tilde{q}}\|F\|_{\L^{\tilde{p}'}_t\L^{\tilde{q}'}_x(\mathbb{R}\times\mathbb{R}^3)},
		\end{equation}
		and the inhomogeneous Strichartz estimate
		\begin{equation}\label{inhomogeneous-Strichartz-estimate}
			\left\|\int_{s<t}\e^{\i(t-s)\Delta}F(s)\ \mathrm{d}s\right\|_{\L^p_t\L^q_x(\mathbb{R}\times \mathbb{R}^3)}\lesssim_{p,q,\tilde{p},\tilde{q}}\|F\|_{\L^{\tilde{p}'}_t\L^{\tilde{q}'}_x(\mathbb{R}\times\mathbb{R}^3)}.
		\end{equation}
	\end{prop}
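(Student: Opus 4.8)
The plan is to run the classical argument: derive a fixed-time $\L^{q'}_x\to\L^q_x$ bound from \eqref{dispersive-estimate} and mass conservation, reduce everything to an untruncated bilinear estimate by $TT^*$, close the non-endpoint cases with the one-dimensional Hardy--Littlewood--Sobolev inequality together with the Christ--Kiselev lemma, and handle the endpoint $p=2$ by the Keel--Tao bilinear interpolation.

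First I would record the fixed-time estimate. Since $\e^{\i t\Delta}$ is unitary on $\L^2_x$, one has $\|\e^{\i t\Delta}u_0\|_{\L^2_x}=\|u_0\|_{\L^2_x}$, and interpolating this with \eqref{dispersive-estimate} (Riesz--Thorin) gives
\begin{equation}\label{fixed-time}
\|\e^{\i t\Delta}u_0\|_{\L^q_x}\lesssim |t|^{-\alpha(q)}\|u_0\|_{\L^{q'}_x},\qquad \alpha(q):=\tfrac32\bigl(1-\tfrac2q\bigr),\quad t\neq0,\ 2\le q\le\infty,
\end{equation}
and, via $\e^{\i t\Delta}\e^{-\i s\Delta}=\e^{\i(t-s)\Delta}$, the same bound with $|t|$ replaced by $|t-s|$; note the admissibility relation $\frac2p+\frac3q=\frac32$ is exactly $\alpha(q)=\frac2p$. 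Next, writing $Tu_0:=\e^{\i t\Delta}u_0$ as a map into $\L^p_t\L^q_x$, the homogeneous bound \eqref{homogeneous-Strichartz-estimate} is $\|T\|\lesssim1$; its adjoint is precisely the dual estimate \eqref{dual-homogeneous-Strichartz-estimate}, and by the $TT^*$ lemma both are equivalent to the untruncated bilinear bound
\begin{equation}\label{untruncated}
\Bigl\|\int_{\mathbb R}\e^{\i(t-s)\Delta}F(s)\,\mathrm ds\Bigr\|_{\L^p_t\L^q_x}\lesssim\|F\|_{\L^{p'}_t\L^{q'}_x}.
\end{equation}
(Once \eqref{homogeneous-Strichartz-estimate} holds for every admissible pair, the mixed-pair versions of \eqref{dual-homogeneous-Strichartz-estimate}--\eqref{inhomogeneous-Strichartz-estimate} follow by composing $T$ for one pair with $T^*$ for the other.)

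For $p>2$ I would prove \eqref{untruncated} by applying Minkowski's inequality and \eqref{fixed-time} inside the $s$-integral, which bounds the left side by $\bigl\|\,|t|^{-\alpha(q)}\ast_t\|F(\cdot)\|_{\L^{q'}_x}\bigr\|_{\L^p_t}$; since $\alpha(q)=\frac2p\in(0,1)$ and $\frac1p=\frac1{p'}+\alpha(q)-1$ hold exactly by admissibility, the $1$-D Hardy--Littlewood--Sobolev inequality in time gives \eqref{untruncated} (the case $q=2$, $p=\infty$ being immediate from unitarity), and the retarded estimate \eqref{inhomogeneous-Strichartz-estimate} then follows from \eqref{untruncated} via the Christ--Kiselev lemma, applicable since $p'<p$. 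The endpoint $p=2$, $q=6$ — where $\alpha(q)=1$, HLS fails, and Christ--Kiselev is unavailable — is the main obstacle; here I would follow Keel--Tao: decompose the operator in \eqref{untruncated} dyadically as $\sum_{j\in\mathbb Z}T_j$ with $T_j$ supported on $|t-s|\sim2^j$, interpolate the $\L^2_x\to\L^2_x$ bound with the $2^{-3j/2}$-decaying $\L^1_x\to\L^\infty_x$ bound to obtain off-diagonal estimates $\|T_jF\|_{\L^2_t\L^{a}_x}\lesssim 2^{-j\beta(a,b)}\|F\|_{\L^2_t\L^{b'}_x}$, with $\beta(a,b)$ of one sign when $(1/a,1/b)$ lies above the diagonal $1/a=1/b=1/6$ and the other sign below it, and then sum the pieces by Bourgain's bilinear real-interpolation argument; the same scheme applied to the retarded pieces yields \eqref{inhomogeneous-Strichartz-estimate} at the double endpoint $(p,q)=(\tilde p,\tilde q)=(2,6)$. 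Finally, the remaining mixed cases (one endpoint, one not) are admissible in $d=3$ (no forbidden endpoint occurs for $d\ge3$) and follow by combining the non-endpoint and endpoint results; I expect the bookkeeping in the Keel--Tao summation to be the only delicate point, everything else being routine.
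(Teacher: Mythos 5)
The paper does not prove this proposition; it is quoted as a black-box classical result with references to Ginibre--Velo, Keel--Tao, and Yajima. Your reconstruction is the standard modern proof from exactly those sources and the key reductions are correct: interpolating unitarity with \eqref{dispersive-estimate} to get the fixed-time $\L^{q'}_x\to\L^q_x$ bound with exponent $\alpha(q)=2/p$; the $TT^*$ equivalence of \eqref{homogeneous-Strichartz-estimate}, \eqref{dual-homogeneous-Strichartz-estimate}, and the untruncated bilinear estimate; one-dimensional Hardy--Littlewood--Sobolev for $p>2$; Christ--Kiselev for the retarded estimate when $\tilde p'<p$; and Keel--Tao's dyadic decomposition plus bilinear real interpolation at the endpoint $(p,q)=(2,6)$. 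One small caution on your final paragraph: for the mixed inhomogeneous cases with one index at $p=2$, Christ--Kiselev still needs $\tilde p'<p=2$, i.e.\ $\tilde p>2$ strictly — so those cases are handled by Christ--Kiselev applied to the untruncated estimate built from one endpoint $T$ and one non-endpoint $T^*$, not by ``combining'' the two proved cases in any interpolation sense; and the double endpoint $(p,q)=(\tilde p,\tilde q)=(2,6)$ genuinely requires running the Keel--Tao argument directly on the retarded pieces, as you indicate. With that caveat made explicit, the argument is complete at the level of detail one expects for a cited proposition.
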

	\subsection{Notations}
	We use the notation $A\lesssim B$ to indicate that $A\leq CB$ for some universal constant $C>0$ that may change from line to line. If both $A\lesssim B$ and $B\lesssim A$ hold, we denote $A\sim B$.
	
	We abbreviate the maximum and minimum of two numbers $a$ and $b$ as $ a\vee b = \max(a,b)$ and $a \wedge b = \min(a,b)$ respectively.
	
	When we mention embeddings, we use $\hookrightarrow$ to denote a continuous embedding, i.e. $X\hookrightarrow Y$ if the inclusion map $X\to Y$ satisfies $\|f\|_Y \lesssim \|f\|_X$.
	
	Our definition for the Fourier transform is
	$$\hat{f}(\xi) = \frac{1}{\sqrt{2\pi}}\int f(x) \e^{-\i\xi\cdot x}\ \mathrm{d}x\quad \text{and}\quad f(x) = \frac{1}{\sqrt{2\pi}}\int \hat{f}(\xi)\e^{\i\xi\cdot x}\ \mathrm{d}\xi.$$
	This Fourier transform is unitary on $\L^2$ and yields the standard Plancherel formula. When a function $f(t,x)$ depends on both time and space, we let $\hat{f}(t,\xi)$ denote the Fourier transform of $f$ in only the spatial variable.
	
	For $s\geq 0$, we define the homogeneous Sobolev space $\dot{\H}^{s}$ as the completion of the Schwartz function $\mathcal{S}$ with respect to the norm
	\begin{equation}\label{Sobolev-norm}
		\|f\|^2_{\dot{\H}^s} = \int |\xi|^{2s}|\hat{f}(\xi)|^2\ \mathrm{d}\xi.
	\end{equation}
	
	We define the \textit{Littlewood-Paley projections} as follows: Let $\varphi$ denote a smooth bump function supported on $\{|\xi|\leq 2\}$ such that $\varphi(\xi) = 1$ for $|\xi|\leq 1$. For dyadic numbers $N\in 2^{\mathbb{Z}}$, we then define $\P_{\leq N}$, $\P_{> N}$ and $\P_{N}$ as
	$$\begin{aligned}
		\widehat{\P_{\leq N}f}(\xi) &= \varphi\left(\xi/N\right)\hat{f}(\xi),\\
		\widehat{\P_{> N}f}(\xi) &= \left[1-\varphi\left(\xi/N\right)\right]\hat{f}(\xi),\\
		\widehat{\P_{N}f}(\xi) &= \left[\varphi\left(\xi/N\right) - \varphi\left(2\xi/N\right)\right]\hat{f}(\xi).
	\end{aligned}$$
	We denote $\P_{N} f = f_{N}$, $\P_{\leq N} f = f_{\leq N}$ and $\P_{>N} f = f_{>N}$. If a function $f(t,x)$ depends on both time and space, we let $f_{N}(t,x)$ denote the Littlewood-Paley projection of $f$ in only spatial variable $x$.
	
	As Fourier multipliers, the Littlewood-Paley projections are commute with derivative operators and linear propagator of Schr\"odinger. Moreover, they are bounded on $\L^p$ for all $1\leq p\leq \infty$ and on $\dot{\H}^s$ for all $s\in \mathbb{R}$. For $1<p<\infty$, we have that
	\begin{equation}\label{sum-projections-converge-L^p}
		\sum_{N\in 2^{\mathbb{Z}}}\P_{N} f\to f,\quad \text{in $\L^p$}.
	\end{equation}
	In addition, $\P_N$, $\P_{\leq N}$ are bounded pointwise by a constant multiple of the Hardy-Littlewood maximal function
	\begin{equation}\label{bdd-by-maximal-function}
		|\P_N f| + |\P_{\leq} f| \lesssim \M f.
	\end{equation}
	
	The \textit{Bernstein inequalities} state that
	\begin{align}
		\||\nabla|^s \P_N f\|_{\L^p} &\sim N^s\|\P_N f\|_{\L^p},\label{Bernstein-inequalities-1}\\
		\||\nabla|^s \P_{\leq N} f\|_{\L^p} &\lesssim N^s\|\P_N f\|_{\L^p},\label{Bernstein-inequalities-2}\\ 
		\|\P_N f\|_{\L^q}, \|\P_{\leq N}f\|_{\L^q} &\lesssim N^{d\left(\frac{1}{p}-\frac{1}{q}\right)}\|\P_N f\|_{\L^p},\label{Bernstein-inequalities-3}
	\end{align}
	for all $s\geq 0$ and $1\leq p\leq q\leq \infty$. Additionally, we have almost orthogonality estimate, which states
	\begin{equation}\label{almost-orthogonality-estimate}
		\left\|\|f_{N}(x)\|_{\ell^2_{N}}\right\|_{\L^p}\sim \|f\|_{\L^p}
	\end{equation}
	for $1<p<\infty$.
	
	We use $\mathcal{O}(X)$ to denote a term that is schematically like $X$. That is a finite linear combination of terms that look like $X$ but potentially with some terms replaced by their absolute value, complex conjugate or a Littlewood-Paley projection. For example,
	$$|v+w|^2(v+w) = \sum_{j=0}^{3}\mathcal{O}(v^{j}w^{3-j})\quad \text{and}\quad f_{\leq N/8}\cdot g_{N_1}\cdot h = \mathcal{O}(fg_{N_1}h).$$
	
	We use $\L^p_t\L^q_x(T\times X)$ to denote the mixed Lebesgue spacetime norm
	$$\|f\|_{\L^p_t\L^q_x(T\times X)} = \left\|\|f(t,x)\|_{\L^q_x(X)}\right\|_{\L^p_t(T)} = \left[\int_T\left(\int_X |f(t,x)|^q\ \mathrm{d}x\right)^{p/q}\ \mathrm{d}t\right]^{1/p}.$$
	When $p=q$, denote $\L^p_{t,x} = \L^p_t\L^q_x$. When $X=\mathbb{R}^d$, we abbreviate $\L^p_t\L^q_x(T) = \L^p_t\L^q_x(T\times \mathbb{R}^d)$. Unless explicitly stated, $\L^p_t\L^q_x$ stands for $\L^p_t\L^q_x([0,+\infty)\times \mathbb{R}^3)$.
	
	\section{Preliminaries}
	It will be important to employ a paraproduct decomposition to the cubic nonlinearity of \eqref{nls}. We recall such a lemma (c.f. lemma 4.2 in \cite{K-2025}):
	\begin{lem}[Paraproduct decomposition]
		Suppose that $f^1,f^2,f^3\in \L^2$. Then $\P_N(f^1f^2f^3)$ can be expressed as
		$$\begin{aligned}
			\P_N(f^1f^2f^3) &= \P_N\left(f^1_{\geq N/8}f^2f^3 + f^1_{<N/8}f^2_{\geq N/8}f^3 + f^1_{<N/8}f^2_{<N/8}f^3_{\geq N/8}\right)\\
			&= \P_N\sum_{N_1\gtrsim N}\sum_{(i,j,k)}\mathcal{O}\left(f^{i}_{N_1}f^{j}f^{k}\right),
		\end{aligned}$$
		where $(i,j,k)$ is summed over all permutation of $(1,2,3)$.
		
		In particular,
		$$\left|\P_N(f^1f^2f^3)\right|\lesssim \sum_{(i,j,k)}\sum_{N_1\gtrsim N}\M\left[f^{i}_{N_1}\cdot \M f^{j} \cdot \M f^{k}\right],$$
		uniformly for $N\in 2^{\mathbb{Z}}$ where $\M$ is the Hardy-Littlewood maximal function.
	\end{lem}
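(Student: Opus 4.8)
The plan is to obtain the first identity from the standard Littlewood--Paley trichotomy together with a frequency-support argument, and then to read off the second identity and the pointwise bound with essentially no extra work. For the first identity I would split the factors one at a time: writing $f^1=f^1_{\geq N/8}+f^1_{<N/8}$ gives $f^1f^2f^3=f^1_{\geq N/8}f^2f^3+f^1_{<N/8}f^2f^3$; splitting $f^2$ in the second summand and then $f^3$ in the resulting all-low summand produces exactly the three ``one high factor'' terms in the statement together with the remainder $R_N:=f^1_{<N/8}f^2_{<N/8}f^3_{<N/8}$. The crucial point is that $\P_N R_N=0$: each $f^i_{<N/8}$ has spatial Fourier transform supported in $\{|\xi|\leq N/8\}$ (and, being a compactly supported $\L^2$ function, that transform lies in $\L^1$, so the Fourier transform of the product $R_N$ is a genuine convolution of the three), whence $\widehat{R_N}$ is supported in $\{|\xi|\leq 3N/8\}$; on the other hand, from the definition of $\varphi$ the multiplier $\varphi(\xi/N)-\varphi(2\xi/N)$ of $\P_N$ vanishes on $\{|\xi|\leq N/2\}$, and $3N/8<N/2$. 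This is precisely why the cutoff is taken at $N/8$ rather than a larger fraction of $N$: one needs $3\cdot\tfrac18<\tfrac12$.

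For the second identity I would note that in each of the three surviving terms exactly one factor, the $i$-th say, carries frequency $\gtrsim N/8$; expanding it into dyadic pieces, $f^i_{\geq N/8}=\sum_{N_1\geq N/8}f^i_{N_1}$, and observing that $N_1\geq N/8$ is equivalent to $N_1\gtrsim N$, each term becomes $\sum_{N_1\gtrsim N}\P_N(f^i_{N_1}f^jf^k)$ once the Littlewood--Paley projections sitting on the other two factors are absorbed into the schematic $\mathcal{O}(\cdot)$ notation. Summing the three configurations, and using $f^jf^k=f^kf^j$ to relabel the index set as all permutations of $(1,2,3)$, yields the asserted form $\P_N\sum_{N_1\gtrsim N}\sum_{(i,j,k)}\mathcal{O}(f^i_{N_1}f^jf^k)$.

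Finally, for the pointwise estimate I would apply $\P_N$ to each schematic summand and invoke \eqref{bdd-by-maximal-function}: $\P_N$, and likewise any low-frequency projection hidden inside an $\mathcal{O}(\cdot)$, is dominated pointwise by a fixed multiple of $\M$. Combining this with $|f^j|\lesssim\M f^j$ a.e. (Lebesgue differentiation, valid since $f^j\in\L^2\subset\L^1_{\mathrm{loc}}$) and the pointwise monotonicity of the maximal function gives $|\P_N\mathcal{O}(f^i_{N_1}f^jf^k)|\lesssim\M(|f^i_{N_1}|\,\M f^j\,\M f^k)$ with constants depending only on the dimension, hence the claimed bound, uniformly in $N\in2^{\mathbb{Z}}$. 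I do not expect a genuine obstacle in any of this; the only points that deserve a moment's care are the frequency numerology above and the a priori meaningfulness of the products, which for the remainder $R_N$ is automatic since its factors are bounded and smooth.
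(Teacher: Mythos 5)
Your proof is correct. The paper does not prove this lemma itself---it is quoted from Kowalski \cite{K-2025}, Lemma~4.2---but your argument is the standard one underlying that reference: the telescoping trichotomy split, the support computation $3\cdot\tfrac{N}{8}<\tfrac{N}{2}$ showing $\P_N$ annihilates the low-low-low remainder, the dyadic expansion of the single high factor, and domination of every Littlewood--Paley projection by the Hardy--Littlewood maximal function via \eqref{bdd-by-maximal-function} together with the a.e.\ bound $|f|\lesssim\M f$ and monotonicity of $\M$.
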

	Similar to proposition 4.3 and 4.4 in \cite{K-2025}, we now prove such kind of estimates which are important to our proof:
	\begin{prop}[Besov spacetime bounds]\label{Besov-spacetime-bounds}
		Let $u(t)$ be the global solution to \eqref{nls} with initial data $u_0\in \smash{\dot{\B}^{1/2}_{2,1}(\mathbb{R}^3)}$. Let $(p,q)$ be a Schr\"odinger-admissible pair for $d=3$ i.e. $(p,q)$ satisfies $\smash{\frac{2}{p}+\frac{3}{q}=\frac{3}{2}}$. Suppose that
		$$\sup_{t\in (0,+\infty)}\|u(t)\|_{\dot{\H}^{1/2}_x} < +\infty.$$
		Then we have
		\begin{equation}\label{Besov-est}
			\sum_{N\in 2^{\mathbb{Z}}}N^{1/2}\left\|u_N\right\|_{\L^p_t\L^q_x}\leq C(\|u\|_{\L^5_{t,x}})\|u_0\|_{\dot{\B}^{1/2}_{2,1}}.
		\end{equation}
	\end{prop}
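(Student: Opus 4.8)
The plan is to run a Littlewood--Paley/bootstrap argument on the Duhamel formula, summing the dyadic pieces with the weight $N^{1/2}$ and the $\ell^1$ summation built into the Besov norm. Write, for each $N\in 2^{\mathbb Z}$,
\[
u_N(t) = \e^{\i t\Delta}\P_N u_0 - \i\int_0^t \e^{\i(t-s)\Delta}\P_N\big(|u|^2u\big)(s)\ \mathrm{d}s.
\]
For the linear term, the homogeneous Strichartz estimate \eqref{homogeneous-Strichartz-estimate} gives $\|\e^{\i t\Delta}\P_N u_0\|_{\L^p_t\L^q_x}\lesssim \|\P_N u_0\|_{\L^2_x}$, and after multiplying by $N^{1/2}$ and summing over $N$ this is exactly $\|u_0\|_{\dot{\B}^{1/2}_{2,1}}$. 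For the nonlinear term, apply the inhomogeneous Strichartz estimate \eqref{inhomogeneous-Strichartz-estimate} with dual exponents $(\tilde p,\tilde q)=(5,30/11)$, so the source term is measured in $\L^{5/4}_t\L^{30/19}_x$, and then invoke the paraproduct decomposition of the preceding lemma to write $\P_N(|u|^2u)=\P_N\sum_{N_1\gtrsim N}\sum_{(i,j,k)}\mathcal{O}(u_{N_1}u^j u^k)$, with one factor frequency-localized at the top scale $N_1\gtrsim N$.

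The key estimate is then to bound $N^{1/2}\|\mathcal{O}(u_{N_1}\cdot u\cdot u)\|_{\L^{5/4}_t\L^{30/19}_x}$ and sum in $N$ and $N_1$. Using Hölder in space, one puts $u_{N_1}$ in $\L^q_x$ (for a Strichartz-admissible $q$) and the two remaining copies of $u$ in $\L^5_x$ each, and Hölder in time distributes $\L^p_t$ against $\L^5_t\cdot\L^5_t$ so that $\tfrac1p+\tfrac2 5=\tfrac4 5$, i.e. $p$ is exactly the time exponent conjugate to the spatial choice; this is the admissible pairing. That produces a bound of the form $N^{1/2}\|u_{N_1}\|_{\L^p_t\L^q_x}\|u\|_{\L^5_{t,x}}^2$. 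Writing $N^{1/2}=(N/N_1)^{1/2}N_1^{1/2}$ and noting $N\lesssim N_1$, the factor $(N/N_1)^{1/2}$ is summable in $N\le N_1$, so
\[
\sum_N N^{1/2}\|u_N\|_{\L^p_t\L^q_x}
\lesssim \sum_N N^{1/2}\|\e^{\i t\Delta}\P_N u_0\|_{\L^p_t\L^q_x}
+ \|u\|_{\L^5_{t,x}}^2\sum_{N_1}N_1^{1/2}\|u_{N_1}\|_{\L^p_t\L^q_x},
\]
up to harmless constants from the geometric series and from summing over the finitely many permutations $(i,j,k)$.

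This is a bootstrap inequality of the shape $Q\le \|u_0\|_{\dot{\B}^{1/2}_{2,1}} + \|u\|_{\L^5_{t,x}}^2\, Q$ for the quantity $Q=\sum_N N^{1/2}\|u_N\|_{\L^p_t\L^q_x}$, which is exactly what we want provided $Q$ is \emph{a priori} finite and the coefficient $\|u\|_{\L^5_{t,x}}^2$ is small. Neither is automatic, so the main obstacle is to make this rigorous: one must first divide $[0,+\infty)$ into finitely many subintervals $I_\ell$ on which $\|u\|_{\L^5_{t,x}(I_\ell)}$ is below an absolute threshold $\delta$ (possible since $\|u\|_{\L^5_{t,x}}<+\infty$, the number of intervals being controlled by $\|u\|_{\L^5_{t,x}}$), run the above estimate on each $I_\ell$ with initial data $u(t_\ell)$ at the left endpoint, and verify that the relevant quantity is finite on each piece. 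Finiteness on a single interval follows from Remark \ref{rmk-on-wp-1}: the hypothesis $\|u\|_{\L^5_{t,x}}<+\infty$ together with Strichartz gives $\||\nabla|^{1/2}u\|_{\L^p_t\L^q_x}<+\infty$ for every admissible pair, and combined with the square-function estimate \eqref{almost-orthogonality-estimate} and the geometric gain this bootstraps to finiteness of $Q$ on each $I_\ell$ with a quantitative bound. One also needs that the Besov norm of the "restarted" data $u(t_\ell)$ stays controlled; this is handled by feeding the bound from $I_1,\dots,I_{\ell-1}$ forward, since $\|u(t_\ell)\|_{\dot{\B}^{1/2}_{2,1}}$ is dominated by the linear evolution plus the Duhamel term already estimated. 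Chaining the finitely many intervals, each contributing a factor depending only on $\|u\|_{\L^5_{t,x}}$, yields the stated bound with $C=C(\|u\|_{\L^5_{t,x}})$. The only genuinely delicate bookkeeping is tracking how the constant compounds over the intervals and confirming the exponent arithmetic $\tfrac2p+\tfrac3q=\tfrac3 2$ is compatible with the Hölder split; both are routine once the scheme is set up.
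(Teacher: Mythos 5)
Your proposal follows essentially the same route as the paper: Duhamel plus Strichartz on the linear and inhomogeneous pieces, paraproduct decomposition on the cubic term with the high-frequency factor carrying the $N^{1/2}$ weight, a geometric sum in $N\lesssim N_1$, subdivision of $[0,\infty)$ into intervals of small $\L^5_{t,x}$ norm to close the bootstrap, and finally extension from one admissible pair to all of them. The only cosmetic difference is the choice of base admissible pair: the paper closes the bootstrap with $(p,q)=(10/3,10/3)$ (dual $(10/7,10/7)$, with both copies of $\M u$ placed in $\L^5_{t,x}$), whereas your Hölder split with dual $(5/4,30/19)$ forces the base pair to be $(5/2,30/7)$; both work identically.
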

	\begin{proof}
		We first consider $p=q=10/3$ before generalizing to all Schr\"odinger-admissible pairs $(p,q)$.
		
		We introduce a small parameter $\eta>0$ which is fixed and chosen later. By Proposition \ref{wellposed-result}, we may decompose $[0,+\infty)$ into $J=J(\eta,\|u\|_{\L^5_{t,x}})$ many intervals $\I_{j} = [t_j,t_{j+1})$ on which
		\begin{equation}\label{small-L5}
			\left\|u\right\|_{\L^5_{t,x}(\I_j\times \mathbb{R}^3)}<\eta.
		\end{equation}
		By the Duhamel formula and Strichartz estimates, for each spacetime slab $\I_j\times\mathbb{R}^3$, we may estimate
		$$\sum_{N\in 2^{\mathbb{Z}}}N^{1/2}\left\|u_N\right\|_{\L^{10/3}_{t,x}(\I_j)} \lesssim \|u_0\|_{\dot{\B}^{1/2}_{2,1}} + \sum_{N\in 2^{\mathbb{Z}}} N^{1/2}\left\|\left(|u|^2u\right)_N\right\|_{\L^{10/7}_{t,x}(\I_j)}.$$
		For the second term, we apply the paraproduct decomposition, H\"older's inequality and the boundedness of the Hardy-Littlewood maximal function, then we obtain
		$$\begin{aligned}
			\sum_{N\in 2^{\mathbb{Z}}}N^{1/2}\left\|u_N\right\|_{\L^{10/3}_{t,x}(\I_j)} &\lesssim \|u_0\|_{\dot{\B}^{1/2}_{2,1}} + \sum_{N,N_1:N_1\gtrsim N} N^{1/2}\left\|u_{N_1}\left(\M u\right)^2\right\|_{\L^{10/7}_{t,x}(\I_j)}\\
			&\lesssim \|u_0\|_{\dot{\B}^{1/2}_{2,1}} + \|u\|^2_{\L^5_{t,x}(\I_j)}\sum_{N,N_1:N_1\gtrsim N} N^{1/2}\left\|u_{N_1}\right\|_{\L^{10/3}_{t,x}(\I_j)}.
		\end{aligned}$$
		Summing over $N$ first, then \eqref{small-L5} implies that
		\begin{equation}\label{Besov-est-2}
			\sum_{N\in 2^{\mathbb{Z}}}N^{1/2}\left\|u_N\right\|_{\L^{10/3}_{t,x}(\I_j)} \lesssim \|u_0\|_{\dot{\B}^{1/2}_{2,1}} + \eta^2\sum_{N_1}N_1^{1/2}\left\|u_{N_1}\right\|_{\L^{10/3}_{t,x}(\I_j)}.
		\end{equation}
		Choosing $\eta$ sufficiently small relative to the constants in \eqref{Besov-est-2}, a standard bootstrap argument yields
		$$\sum_{N\in 2^{\mathbb{Z}}}N^{1/2}\left\|u_N\right\|_{\L^{10/3}_{t,x}(\I_j)}\leq C\|u_0\|_{\dot{\B}^{1/2}_{2,1}}.$$
		Summing over $j=1,\dots,J(\smash{\|u\|_{\L^5_{t,x}}})$, this concludes the proof of the proposition for $p=q=10/3$ for the initial value problem.
		
		By the Duhamel formula and Strichartz estimates, for any Schr\"odinger admissible pair $(p,q)$, we can similarly obtain that
		$$\sum_{N\in 2^{\mathbb{Z}}}N^{1/2}\left\|u_N\right\|_{\L^p_t\L^q_x}\lesssim \|u_0\|_{\dot{\B}^{1/2}_{2,1}} + \|u\|^2_{\L^5_{t,x}}\sum_{N_1\in 2^{\mathbb{Z}}} N^{1/2}_1\left\|u_{N_1}\right\|_{\L^{10/3}_{t,x}}\leq C(\|u\|_{\L^5_{t,x}})\|u_0\|_{\dot{\B}^{1/2}_{2,1}}.$$
		The proof is complete.
	\end{proof}
	
	We also need the following stability result.
	\begin{prop}[$\dot{\B}^{1/2}_{2,1}$ stability]\label{B-stability}
		Let $(p,q)$ be a Schr\"odinger admissible pair for $d=3$. Suppose that $u(t)$ and $v(t)$ are the solutions to \eqref{nls} satisfying
		$$\sup_{t\in(0,+\infty)}\|u(t)\|_{\dot{\H}^{1/2}_x},\ \sup_{t\in (0,+\infty)}\|v(t)\|_{\dot{\H}^{1/2}_x} < +\infty$$
		with corresponding initial data $u_0,v_0$ respectively obeying the bound
		$$\|u_0\|_{\dot{\B}^{1/2}_{2,1}},\ \|v_0\|_{\dot{\B}^{1/2}_{2,1}}\leq R_0.$$
		Then
		\begin{equation}\label{Besov-stability}
			\sum_{N\in 2^{\mathbb{Z}}}N^{1/2}\|(u-v)_N\|_{\L^p_t\L^q_x}\leq C(S,R_0)\|u_0-v_0\|_{\dot{\B}^{1/2}_{2,1}}
		\end{equation}
		where $S = \|u\|_{\L^5_{t,x}} \vee \|v\|_{\L^5_{t,x}}$.
	\end{prop}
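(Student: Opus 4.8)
The plan is to run the argument of Proposition~\ref{Besov-spacetime-bounds} on the difference $w := u-v$, which solves $\i\partial_t w+\Delta w = |u|^2u-|v|^2v$. Expanding the right-hand side schematically, $|u|^2u-|v|^2v = \mathcal{O}(u^2w)+\mathcal{O}(v^2w)$, i.e.\ a finite linear combination of trilinear terms each carrying exactly one factor of $w$ (possibly conjugated or frequency-localised) together with two factors drawn from $\{u,\bar u,v,\bar v\}$. Three ingredients are available at the start: (i) the scattering norms $\|u\|_{\L^5_{t,x}}$ and $\|v\|_{\L^5_{t,x}}$ are finite by Proposition~\ref{wellposed-result} and Remark~\ref{rmk-on-wp-1}, so $S<+\infty$; (ii) Proposition~\ref{Besov-spacetime-bounds} applies to $u$ and to $v$, giving $\sum_N N^{1/2}\|u_N\|_{\L^p_t\L^q_x}\vee\sum_N N^{1/2}\|v_N\|_{\L^p_t\L^q_x}\le C(S)R_0$ for every admissible $(p,q)$, whence $\sum_N N^{1/2}\|w_N\|_{\L^p_t\L^q_x}<+\infty$ a priori; and (iii) since $(5,30/11)$ is admissible and $\dot{\mathrm{W}}^{1/2,30/11}_x\hookrightarrow\L^5_x$, Bernstein \eqref{Bernstein-inequalities-3} gives $\|w\|_{\L^5_{t,x}(\I)}\lesssim\sum_N N^{1/2}\|w_N\|_{\L^5_t\L^{30/11}_x(\I)}$ on any time interval $\I$, which is what lets us absorb factors of $\|w\|_{\L^5}$.

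As in Proposition~\ref{Besov-spacetime-bounds}, I would fix a small $\eta=\eta(S,R_0)>0$ and split $[0,+\infty)$ into $J=J(S,R_0)$ intervals $\I_j=[t_j,t_{j+1})$ on which both $\|u\|_{\L^5_{t,x}(\I_j)}$ and $\|v\|_{\L^5_{t,x}(\I_j)}$ are $<\eta$ (take the common refinement of the two partitions). Put $Y_j := \sum_N N^{1/2}\|w_N\|_{\L^{10/3}_{t,x}(\I_j)}+\sum_N N^{1/2}\|w_N\|_{\L^5_t\L^{30/11}_x(\I_j)}$, which is finite by~(ii). On $\I_j$ I would apply the Duhamel formula with base point $t_j$, the homogeneous and inhomogeneous Strichartz estimates of Proposition~\ref{strichartz-est} (nonlinearity placed in the dual space $\L^{10/7}_{t,x}$), the paraproduct decomposition, Hölder's inequality, and the maximal-function bound from the paraproduct lemma, exactly as in Proposition~\ref{Besov-spacetime-bounds}. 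After the paraproduct there are two kinds of terms: if the high frequency $N_1\gtrsim N$ sits on the $w$-factor, summing first in $N$ produces a factor $\|u\|^2_{\L^5(\I_j)}\vee\|u\|_{\L^5(\I_j)}\|v\|_{\L^5(\I_j)}\vee\|v\|^2_{\L^5(\I_j)}\lesssim\eta^2$ times $\sum_{N_1}N_1^{1/2}\|w_{N_1}\|_{\L^{10/3}_{t,x}(\I_j)}\le Y_j$; if the high frequency sits on a $u$- or $v$-factor, one peels off $\|w\|_{\L^5(\I_j)}$ and one more small $\L^5$-factor and is left with $\sum_{N_1}N_1^{1/2}\|u_{N_1}\|_{\L^{10/3}_{t,x}(\I_j)}\vee\sum_{N_1}N_1^{1/2}\|v_{N_1}\|_{\L^{10/3}_{t,x}(\I_j)}\le C(S)R_0$ by Proposition~\ref{Besov-spacetime-bounds}, so that term is $\lesssim C(S,R_0)\,\eta\,\|w\|_{\L^5(\I_j)}\lesssim C(S,R_0)\,\eta\,Y_j$ by~(iii). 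Collecting, $Y_j\le C\|w(t_j)\|_{\dot{\B}^{1/2}_{2,1}}+\big(C\eta^2+C(S,R_0)\eta\big)Y_j$; taking $\eta=\eta(S,R_0)$ small and absorbing (legitimate because $Y_j<+\infty$) gives $Y_j\le C\|w(t_j)\|_{\dot{\B}^{1/2}_{2,1}}$.

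To run the induction in $j$, I would apply the dual homogeneous Strichartz estimate to the Duhamel representation of $w(t_{j+1})$ over $\I_j$ and reuse the nonlinearity bound just obtained, getting $\|w(t_{j+1})\|_{\dot{\B}^{1/2}_{2,1}}\le\|w(t_j)\|_{\dot{\B}^{1/2}_{2,1}}+\big(C\eta^2+C(S,R_0)\eta\big)Y_j\le(1+C)\|w(t_j)\|_{\dot{\B}^{1/2}_{2,1}}$; this simultaneously shows each $\|w(t_j)\|_{\dot{\B}^{1/2}_{2,1}}$ is finite, starting from $\|w(t_0)\|_{\dot{\B}^{1/2}_{2,1}}=\|u_0-v_0\|_{\dot{\B}^{1/2}_{2,1}}\le2R_0$. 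Iterating, $\|w(t_j)\|_{\dot{\B}^{1/2}_{2,1}}\le(1+C)^j\|u_0-v_0\|_{\dot{\B}^{1/2}_{2,1}}$, and since $\|w_N\|_{\L^{10/3}_{t,x}([0,\infty))}\le\sum_{j}\|w_N\|_{\L^{10/3}_{t,x}(\I_j)}$ (likewise for $\L^5_t\L^{30/11}_x$), summing over $j$ and $N$ yields \eqref{Besov-stability} for the pairs $(10/3,10/3)$ and $(5,30/11)$ with the finite constant $C(S,R_0)=C\sum_{j=0}^{J-1}(1+C)^j$, finite because $J=J(S,R_0)<+\infty$. For a general admissible $(p,q)$, one more global Duhamel/Strichartz step suffices: bound the nonlinearity by $\sum_N N^{1/2}\|(|u|^2u-|v|^2v)_N\|_{\L^{10/7}_{t,x}}\lesssim (S^2+S\,C(S)R_0)\big(\sum_N N^{1/2}\|w_N\|_{\L^{10/3}_{t,x}}+\|w\|_{\L^5_{t,x}}\big)\le C(S,R_0)\|u_0-v_0\|_{\dot{\B}^{1/2}_{2,1}}$, using the two pairs just controlled together with~(ii)--(iii).

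I expect the main obstacle to be precisely the terms in which the high frequency lands on $u$ or $v$ rather than on $w$: there the needed smallness cannot be supplied by the frequency of the $w$-factor, so it must come out as a factor $\|w\|_{\L^5_{t,x}(\I_j)}$; this is why the bootstrap quantity $Y_j$ has to include an admissible pair with time exponent $5$ and why the Sobolev embedding $\dot{\mathrm{W}}^{1/2,30/11}_x\hookrightarrow\L^5_x$ is indispensable rather than cosmetic. The secondary bookkeeping issue is the propagation of the $\dot{\B}^{1/2}_{2,1}$-bound across the $J$ small-$\L^5$ intervals, which inflates the final constant geometrically in $J$; this is harmless because $J$ is a fixed finite number depending only on $S$ and $R_0$.
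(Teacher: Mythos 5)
Your proposal is correct and follows essentially the same route as the paper: interval decomposition into small-$\L^5$ slabs, Duhamel plus Strichartz, the paraproduct lemma and H\"older, Proposition~\ref{Besov-spacetime-bounds} to absorb the terms where the high frequency lands on $u$ or $v$, and a bootstrap closed by choosing $\eta$ small, followed by a final Duhamel/Strichartz pass to reach general admissible $(p,q)$. The only real difference is that you spell out the propagation of $\|w(t_j)\|_{\dot{\B}^{1/2}_{2,1}}$ across the $J$ subintervals via the dual Strichartz estimate and a geometric accumulation $(1+C)^j$, a bookkeeping step the paper performs implicitly when it writes the slab estimate against $\|u_0-v_0\|_{\dot{\B}^{1/2}_{2,1}}$ and then ``sums over $j$''; this is a harmless and arguably clarifying refinement, not a different argument.
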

	\begin{proof}
		We first consider $p=q=10/3$ before generalizing to all Schr\"odinger-admissible pairs $(p,q)$.
		
		We introduce $\eta>0$. Decompose $[0,+\infty)$ into $J=J(\eta,S)$ many intervals $\I_j$ on which
		\begin{equation}\label{B-stability-est-1}
			\left\||u|\vee |v|\right\|_{\L^5_{t,x}(\I_j)}\leq \|u\|_{\L^5_{t,x}(\I_j)} + \|v\|_{\L^5_{t,x}(\I_j)}<\eta.
		\end{equation}
		By the Duhamel formula and Strichartz estimates, for each spacetime slab $\I_j\times\mathbb{R}^3$, we may estimate
		\begin{align}
			\sum_{N}N^{1/2}\left\|(u-v)_N\right\|_{\L^{10/3}_{t,x}}&\lesssim \|u_0-v_0\|_{\dot{\B}^{1/2}_{2,1}} + \sum_{N}N^{1/2}\left\|\left(|u|^2u - |v|^2v\right)_N\right\|_{\L^{10/7}_{t,x}}\notag\\
			&=\|u_0-v_0\|_{\dot{\B}^{1/2}_{2,1}} + \text{\Romannum{2}}\label{B-stability-est-2}
		\end{align}
		
		We focus on the second term and decompose the non-linearity as
		$$|u|^2u - |v|^2v = u^2\overline{(u-v)} + \bar{v}u(u-v) + |v|^2(u-v).$$
		Then
		$$\text{\Romannum{2}}\leq \sum_{N}N^{1/2}\left\{\|[u^2\overline{(u-v)}]_N\|_{\L^{10/7}_{t,x}} + \left\|[\bar{v}u(u-v)]_N\right\|_{\L^{10/7}_{t,x}} + \left\|[|v|^2(u-v)]_N\right\|_{\L^{10/7}_{t,x}}\right\}.$$
		Applying paraproduct decomposition, we obtain
		\begin{align}
			\text{\Romannum{2}}&\lesssim \sum_{N,N_1:N_1\gtrsim N} N^{1/2}\left[\left\|u_{N_1}\M\left[|u|\vee|v|\right]\M(u-v)\right\|_{\L^{10/7}_{t,x}} + \left\|v_{N_1}\M\left[|u|\vee|v|\right]\M(u-v)\right\|_{\L^{10/7}_{t,x}}\right]\notag\\
			&\quad+\sum_{N,N_1:N_1\gtrsim N}N^{1/2}\|(u-v)_{N_1}\left[\M\left(|u|\vee|v|\right)\right]^2\|_{\L^{10/7}_{t,x}}.\label{B-stability-est-3}
		\end{align}
		Summing over $N$ firstly, we obtain
		\begin{align}
			\text{\Romannum{2}}&\lesssim \sum_{N_1} N_1^{1/2}\left[\left\|u_{N_1}\M\left[|u|\vee|v|\right]\M(u-v)\right\|_{\L^{10/7}_{t,x}} + \left\|v_{N_1}\M\left[|u|\vee|v|\right]\M(u-v)\right\|_{\L^{10/7}_{t,x}}\right]\notag\\
			&\quad+\sum_{N_1}N_1^{1/2}\|(u-v)_{N_1}\left[\M\left(|u|\vee|v|\right)\right]^2\|_{\L^{10/7}_{t,x}}.\label{B-stability-est-4}
		\end{align}
		By H\"older's inequality, we estimate
		\begin{align}
			\text{\Romannum{2}}&\lesssim \||u|\vee|v|\|_{\L^5_{t,x}}\|u-v\|_{\L^{10/3}_t\L^{15/2}_x}\left(\sum_{N_1}N^{1/2}_1\|u_{N_1}\|_{\L^5_t\L^{30/11}_x} + \sum_{N_1} N_1^{1/2}\|v_{N_1}\|_{\L_t^5\L^{30/11}_x}\right)\notag\\
			&\quad + \||u|\vee |v|\|^2_{\L^5_{t,x}}\sum_{N_1}N_1^{1/2}\|(u-v)_{N_1}\|_{\L^{10/3}_{t,x}}.\label{B-stability-est-5}
		\end{align}
		By Bernstein's inequality,
		$$\|u-v\|_{\L^{10/3}_t\L^{15/2}_x}\leq \sum_{N_1}\|(u-v)_{N_1}\|_{\L^{10/3}_t\L^{15/2}_x}\lesssim \sum_{N_1} N_1^{1/2}\|(u-v)_{N_1}\|_{\L^{10/3}_{t,x}}.$$
		By proposition \ref{Besov-spacetime-bounds}, it turns out that
		$$\text{\Romannum{2}}\lesssim C(S,R_0)\eta \sum_{N_1} N_1^{1/2}\|(u-v)_{N_1}\|_{\L^{10/3}_{t,x}},$$
		which implies
		\begin{equation}\label{B-stability-est-6}
			\sum_{N}N^{1/2}\|(u-v)_N\|_{\L^{10/3}_{t,x}}\lesssim \|u_0-v_0\|_{\dot{\B}^{1/2}_{2,1}} + C(S,R_0)\eta\sum_{N_1}N_1^{1/2}\|(u-v)_{N_1}\|_{\L^{10/3}_{t,x}}
		\end{equation}
		Choosing $\eta$ small enough depending on $S,R_0$ and absolute constants in \eqref{B-stability-est-6}, a standard bootstrap argument yields
		\begin{equation}\label{B-stability-est-7}
			\sum_{N}N^{1/2}\|(u-v)_N\|_{\L^{10/3}_{t,x}(\I_j)} \leq C(S,R_0)\|u_0-v_0\|_{\dot{\B}^{1/2}_{2,1}}.
		\end{equation}
		Summing over $j=1,\dots,J(S,R_0)$, this concludes the proof of the proposition for $p=q=10/3$ for the initial value problem.
		
		By the Duhamel formula and Strichartz estimates, for any Schr\"odinger admissible pair $(p,q)$, we have
		\begin{align}
			\sum_{N}N^{1/2}\|(u-v)_N\|_{\L^p_t\L^q_x}&\lesssim \|u_0-v_0\|_{\dot{\B}^{1/2}_{2,1}} + C(S,R_0)(\|u\|_{\L^5_{t,x}}+ \|v\|_{\L^5_{t,x}})\sum_{N}N^{1/2}\|(u-v)_N\|_{\L^{10/3}_{t,x}}\notag\\
			&\leq C(S,R_0)\|u_0-v_0\|_{\dot{\B}^{1/2}_{2,1}}.\label{B-stability-est-8}
		\end{align}
		The proof is complete.
	\end{proof}
	
	\section{Proof of the main theorems}
	Let $u(t)$ be the solution to \eqref{nls} with initial data $u_0$ satisfying $\|u_0\|_{\dot{\B}^{1/2}_{2,1}}\leq R_0$ and $$\sup_{t\in(0,+\infty)}\|u(t)\|_{\dot{\H}^{1/2}_x} < +\infty.$$
	Define
	\begin{equation}\label{A}
		A(t) := \sup_{0<s\leq t}s^{3/2}\|u(s)\|_{\L^{\infty}_x}.
	\end{equation}
	Note that $A(t)$ is monotone increasing. Our aim is to prove there is some constant $C(\smash{\|u\|_{\L^5_{t,x}}},R_0)$ such that
	\begin{equation}\label{aim-est}
		A(t)\leq C(\|u\|_{\L^5_{t,x}},R_0)\|u_0\|_{\L^1_x},\ \text{holds for any $t>0$}.
	\end{equation}
	By the density of Schwartz functions in $\smash{\dot{\B}^{1/2}_{2,1}\cap \L^1}$, it suffices to consider Schwartz solutions. Then for any given $l$, one can find $C_{l}$ such that
	\begin{equation}\label{est-hold-some-t}
		A(t)\leq C_{l}\|u_0\|_{\L^1_x},\quad 0< t\leq l.
	\end{equation}
	Also the solution is continuous in time in $\L^{\infty}$.
	
	Now we start to prove theorem \ref{main-result} by bootstrap argument.
	
	\subsection{Base case.} Theorem \ref{main-result} holds trivially for $\smash{\|u_0\|_{\dot{\B}^{1/2}_{2,1}}}=0$. We choose a small data case to be the base case to do induction. Specifically, we prove the following bootstrap lemma.
	\begin{lem}
		Assume $\smash{\sup_{t\in(0,+\infty)}\|u(t)\|_{\dot{\H}^{1/2}_x}}<+\infty$. There exists $0<\varepsilon<1$ depending on $\smash{\|u\|_{\L^5_{t,x}}}$ and some absolute constants, and there exists $C_0(\smash{\|u\|_{\L^5_{t,x}}})>0$ such that if assume $\smash{\|u_0\|_{\dot{\B}^{1/2}_{2,1}}}\leq \varepsilon$ and assume $A(L)\leq C_0\|u_0\|_{\L^1_x}$ for any $L>0$, then we have $A(L)\leq \smash{\tfrac{C_0}{2}}\|u_0\|_{\L^1_x}$.
	\end{lem}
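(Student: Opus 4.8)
The plan is to run a Duhamel/dispersive bootstrap after a scaling normalization. Since \eqref{nls}, the $\dot{\B}^{1/2}_{2,1}$-, $\dot{\H}^{1/2}$- and $\L^5_{t,x}$-norms, and the target estimate \eqref{main-est} are all invariant under $u\mapsto\lambda u(\lambda^2\cdot,\lambda\cdot)$, I would first rescale so that $\|u_0\|_{\L^1_x}=1$; the assertion then reads $A(L)\le\tfrac{C_0}{2}$. Splitting $\hat u_0$ into its low- and high-frequency parts gives $\M[u_0]=\|u_0\|_{\L^2_x}^2\lesssim\|u_0\|_{\dot{\H}^{1/2}_x}^2+\|u_0\|_{\L^1_x}^2\lesssim1$, using $\|u_0\|_{\dot{\H}^{1/2}_x}\le\|u_0\|_{\dot{\B}^{1/2}_{2,1}}\le\varepsilon$ from \eqref{embedding}. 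Small-data theory together with Proposition \ref{Besov-spacetime-bounds} then yields $\|u\|_{\L^5_{t,x}}\lesssim\varepsilon$, $\sup_t\|u(t)\|_{\dot{\H}^{1/2}_x}\lesssim\varepsilon$ (hence $\sup_t\|u(t)\|_{\L^3_x}\lesssim\varepsilon$ by Sobolev, and $\sup_t\|u(t)\|_{\dot{\B}^{1/2}_{2,1}}\lesssim\varepsilon$ and $\|u\|_{\L^2_t\L^\infty_x}\lesssim\varepsilon$ by taking $(p,q)=(\infty,2)$, resp. $(2,6)$ together with \eqref{Bernstein-inequalities-3} in Proposition \ref{Besov-spacetime-bounds}), and $\||\nabla|^{1/2}u\|_{\L^p_t\L^q_x}\lesssim\varepsilon$ for all admissible $(p,q)$. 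For fixed $0<t\le L$, Duhamel's formula gives
$$\|u(t)\|_{\L^\infty_x}\le\|\e^{\i t\Delta}u_0\|_{\L^\infty_x}+\left\|\int_0^{t/2}\e^{\i(t-s)\Delta}(|u|^2u)(s)\ \mathrm{d}s\right\|_{\L^\infty_x}+\left\|\int_{t/2}^{t}\e^{\i(t-s)\Delta}(|u|^2u)(s)\ \mathrm{d}s\right\|_{\L^\infty_x},$$
and by \eqref{dispersive-estimate} the first term is $\le C_{\mathrm{disp}}\,t^{-3/2}$, which I will absorb into $\tfrac{C_0}{4}t^{-3/2}$ by taking $C_0$ large.

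For the bulk term $s\in[0,t/2]$ one has $|t-s|\sim t$, so \eqref{dispersive-estimate} bounds it by $\lesssim t^{-3/2}\int_0^{t/2}\|u(s)\|_{\L^3_x}^3\ \mathrm{d}s$. I would split this $s$-integral at a level $s_0$: on $[0,s_0]$ use $\|u(s)\|_{\L^3_x}\lesssim\|u(s)\|_{\dot{\H}^{1/2}_x}\lesssim\varepsilon$ for a contribution $\lesssim s_0\varepsilon^3$; on $[s_0,t/2]$ use $\|u(s)\|_{\L^3_x}^3\le\|u(s)\|_{\L^2_x}^2\|u(s)\|_{\L^\infty_x}\lesssim\|u(s)\|_{\L^\infty_x}\le A(t)s^{-3/2}\le C_0 s^{-3/2}$ for a contribution $\lesssim C_0 s_0^{-1/2}$; optimizing $s_0\sim(C_0/\varepsilon^3)^{2/3}$ gives $\int_0^{t/2}\|u(s)\|_{\L^3_x}^3\ \mathrm{d}s\lesssim\varepsilon C_0^{2/3}$, so the bulk term is $\lesssim\varepsilon C_0^{2/3}\,t^{-3/2}$.

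For the near-diagonal term $s\in[t/2,t]$ the $\L^1\to\L^\infty$ kernel is not locally integrable at $s=t$, so I would decompose $[t/2,t]=\bigcup_{k\ge1}I_k$ with $I_k=[t-2^{-k}t,\,t-2^{-k-1}t]$ (so that $|t-s|\sim2^{-k}t$ on $I_k$), pass to Littlewood--Paley pieces, apply an interpolated dispersive bound $\|\e^{\i\tau\Delta}g\|_{\L^q_x}\lesssim|\tau|^{-3(1/2-1/q)}\|g\|_{\L^{q'}_x}$ followed by Bernstein \eqref{Bernstein-inequalities-3} to return to $\L^\infty_x$, and expand $|u|^2u$ by the paraproduct decomposition. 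On each $I_k$ (where $s\ge t/2$) one factor of $u$ is estimated in $\L^\infty_{t,x}$ through the bootstrap hypothesis $\|u(s)\|_{\L^\infty_x}\le C_0 t^{-3/2}$ — this supplies the decay — the high-frequency factor through the $\dot{\B}^{1/2}_{2,1}$-spacetime bound of Proposition \ref{Besov-spacetime-bounds} (and $\|u\|_{\L^2_t\L^\infty_x}\lesssim\varepsilon$), and the remaining factor through mass conservation and Sobolev embedding; choosing $q$ and the time-Lebesgue exponent on $I_k$ so that the factor $(2^{-k}t)^{-3(1/2-1/q)}|I_k|$ is summable in $k$ while the Bernstein loss $N^{3/q}$ is absorbed by the available spacetime norms, one should obtain a bound $\lesssim\varepsilon^{\kappa_0}C_0\,t^{-3/2}$ for some $\kappa_0>0$. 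Collecting the three contributions,
$$t^{3/2}\|u(t)\|_{\L^\infty_x}\le C_{\mathrm{disp}}+C_1\varepsilon C_0^{2/3}+C_2\varepsilon^{\kappa_0}C_0\qquad(0<t\le L),$$
with $C_{\mathrm{disp}},C_1,C_2,\kappa_0$ absolute; taking the supremum over $0<t\le L$, fixing $C_0:=4C_{\mathrm{disp}}$, and then choosing $\varepsilon$ small enough that $C_1\varepsilon C_0^{2/3}+C_2\varepsilon^{\kappa_0}C_0\le\tfrac{C_0}{4}$, gives $A(L)\le\tfrac{C_0}{2}$, which is the claim after undoing the rescaling.

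The main obstacle is the near-diagonal term: since the $\L^1\to\L^\infty$ dispersive kernel fails to be locally integrable at $s=t$, one is forced onto an interpolated $\L^{q'}\to\L^q$ estimate, but the range of $q$ needed for $k$-summability of the dyadic pieces ($q<6$) is in tension with the range needed so that the Bernstein loss incurred in returning to $\L^\infty_x$ does not exceed the weight $N^{1/2}$ supplied by Proposition \ref{Besov-spacetime-bounds} ($q\ge6$). Reconciling these should require exploiting the paraproduct structure of the cubic nonlinearity carefully — distributing frequency weights across all three factors, and using that in high--high interactions two of the factors carry the output frequency — together with mass conservation and the endpoint bound $\|u\|_{\L^2_t\L^\infty_x}\lesssim\varepsilon$; this bookkeeping is where I expect the bulk of the technical work. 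By contrast the bulk and linear terms are essentially routine once $\|u_0\|_{\L^1_x}$ has been normalized, the only point to notice being that the part of the $s$-integral near the origin is harmless because $\|u(s)\|_{\L^3_x}$ is already $\lesssim\varepsilon$ there.
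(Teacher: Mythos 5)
Your rescaling to $\|u_0\|_{\L^1_x}=1$, your treatment of the linear term, and your handling of the bulk term $F_1$ (split the $s$-integral at a level $s_0\sim(C_0/\varepsilon^3)^{2/3}$, use $\|u\|_{\L^3_x}\lesssim\varepsilon$ on $[0,s_0]$ and mass conservation plus the bootstrap hypothesis on $[s_0,t/2]$) all match the paper's argument in substance. The gap is exactly the one you flagged: your proposal for the near-diagonal term $F_2$ (dyadic decomposition of $[t/2,t]$ plus interpolated $\L^{q'}_x\to\L^q_x$ dispersive bounds plus Bernstein back to $\L^\infty_x$) does run into the tension you describe, and you do not resolve it; the constraint $3(1/2-1/q)<1$ for $k$-summability forces $q<6$, which makes the Bernstein loss $N^{3/q}$ exceed the weight $N^{1/2}$ available from Proposition~\ref{Besov-spacetime-bounds}, and distributing the weight over the paraproduct factors does not obviously fix this.

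The paper uses a different and cleaner device that avoids interpolated dispersive estimates entirely. After the paraproduct decomposition, for fixed frequencies $N,N_1,N_2$ it splits $[t/2,t]$ at a \emph{frequency-dependent} threshold $B=N^{-1}N_2^{-1}$. On the far piece $|t-s|>B$ it uses the full $\L^1_x\to\L^\infty_x$ dispersive estimate together with Cauchy--Schwarz and the bootstrap $\|u(s)\|_{\L^\infty_x}\le C_0 s^{-3/2}\|u_0\|_{\L^1_x}$, producing a factor $B^{-1/2}\|u_{N_1}\|_{\L^\infty_t\L^2_x}\|u_{N_2}\|_{\L^\infty_t\L^2_x}$. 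On the near piece $|t-s|\le B$ it uses \emph{no} dispersive decay at all: instead it bounds $\|\P_N\e^{\i(t-s)\Delta}g\|_{\L^\infty_x}\lesssim N^{3/2}\|g\|_{\L^2_x}$ by Bernstein and unitarity on $\L^2_x$, puts $u_{N_2}$ in $\L^\infty_x$ and Bernsteins again, getting $BN^{3/2}N_2^{3/2}\|u_{N_1}\|_{\L^\infty_t\L^2_x}\|u_{N_2}\|_{\L^\infty_t\L^2_x}$ times the bootstrap decay from the low factor. The choice $B=N^{-1}N_2^{-1}$ equalizes the two and yields exactly the weights $N^{1/2}\|u_{N_1}\|_{\L^\infty_t\L^2_x}$ and $N_2^{1/2}\|u_{N_2}\|_{\L^\infty_t\L^2_x}$ that Proposition~\ref{Besov-spacetime-bounds} (with $(p,q)=(\infty,2)$) sums to $\lesssim\varepsilon$, giving $\|F_2\|_{\L^\infty_x}\lesssim C_0\,\varepsilon^2\,t^{-3/2}\|u_0\|_{\L^1_x}$. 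In short, the resolution you are looking for is not a careful choice of a single interpolation exponent $q$, but a cutoff length $B$ that varies with the output and inner frequencies; that is the missing idea.
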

	\begin{proof}
		For any $0<t\leq L$, by Duhamel's formula, write
		\begin{equation}\label{Duhamel}
			u(t,x) = \e^{\i t\Delta}u_0 - \i\int_{0}^{t}\e^{\i(t-s)\Delta}(|u|^2u)(s)\ \mathrm{d}s = u_{\text{l}} + u_{\text{nl}}.
		\end{equation}
		We split $u_{\text{nl}}$ into
		\begin{equation}\label{u_nl}
			u_{\text{nl}} = F_1(t) + F_2(t),
		\end{equation}
		where
		$$\begin{aligned}
			F_1(t) &= -\i\int_{0}^{t/2}\e^{\i(t-s)\Delta}(|u|^2u)(s)\ \mathrm{d}s,\\
			F_2(t) &= -\i\int_{t/2}^{t}\e^{\i(t-s)\Delta}(|u|^2u)(s)\ \mathrm{d}s.
		\end{aligned}$$
		
		\textbf{Estimate for} $u_{\text{l}}${\bf.} Dispersive estimate gives that for some constant $C_1$, we have
		\begin{equation}\label{u_l-est}
			\|u_{\text{l}}(t)\|_{\L^{\infty}_x}\leq C_1t^{-3/2}\|u_0\|_{\L^1_x}.
		\end{equation}
		
		\textbf{Estimate for $F_1(t)$.} Note that $\dot{\H}^{1/2}_x(\mathbb{R}^3)$ is embedded into $\L^3_x(\mathbb{R}^3)$. It turns out that $u_0\in \L^3_x$ and $u(t)\in \L^{\infty}_t\L^3_x$. Since $u_0\in \L^1_x$ and the mass $m=\int |u(x,t)|^2\ \mathrm{d}x$ is conserved, we know $u_0\in \L^2_x$ and
		\begin{equation}\label{u_0-L2-set}
			\|u\|^2_{\L^2_x}=\|u_0\|^2_{\L^2_x}\leq \|u_0\|^{1/2}_{\L^1_x}\|u_0\|^{3/2}_{\L^3_x}\leq C^{3/2}_2\|u_0\|^{1/2}_{\L^1_x}\|u_0\|_{\dot{\H}^{1/2}_x}^{3/2}\leq C^{3/2}_2\|u_0\|^{1/2}_{\L^1_x}
		\end{equation}
		by H\"older's inequality where $C_2$ is from Sobolev embedding.
		
		By dispersive estimate, we estimate $F_1$ as
		\begin{equation}\label{F1-est-1}
			\|F_1(t)\|_{\L^{\infty}_x}\leq 2C_1 t^{-3/2}\|u(s)\|^3_{\L^3_s\L^3_x((0,+\infty))}.
		\end{equation}
		We introduce a parameter $M$ and estimate \eqref{F1-est-1} as
		\begin{equation}\label{F1-est-2}
			\|u(s)\|^3_{\L^3_s\L^3_x((0,+\infty))} \leq \|u(s)\|^3_{\L^3_s\L^3_x((0,M))} + \int_{M}^{+\infty} \|u(s)\|_{\L^\infty_s}\|u\|^2_{\L^2_x}\ \mathrm{d}s.
		\end{equation}
		By remark \ref{rmk-on-wp-1}, $\smash{\sup_{t\in[0,+\infty)}\|u(t)\|_{\dot{\H}^{1/2}_x}}\leq A = A(\smash{\|u\|_{\L^5_{t,x}}})$. By Sobolev embedding,
		\begin{equation}\label{u-L3}
			\|u(s)\|^3_{\L^3_s\L^3_x((0,M))} \leq MC^3_2\|u(s)\|^3_{\L^{\infty}_s\dot{\H}^{1/2}_x}\leq MC^3_2A^3.
		\end{equation}
		By \eqref{u_0-L2-set} and bootstrap assumption, we obtain
		\begin{equation}\label{F1-est-3}
			\int_{M}^{+\infty} \|u(s)\|_{\L^\infty_s}\|u\|^2_{\L^2_x}\ \mathrm{d}s\leq C_0C^{3/2}_2\|u_0\|^{3/2}_{\L^1_x}\int_{M}^{+\infty}s^{-3/2}\ \mathrm{d}s \leq 2C_0C^{3/2}_2M^{-1/2}\|u_0\|^{3/2}_{\L^1_x}.
		\end{equation}
		Taking $M = 1600C^2_1C^3_2\|u_0\|_{\L^1_x}$, we obtain
		\begin{equation}\label{F1-final-est-2}
			\|F_1(t)\|_{\L^{\infty}_x}\leq \left(3200C_1^3C^6_2A^3+\tfrac{1}{10}C_0\right)t^{-3/2}\|u_0\|_{\L^1_x}
		\end{equation}
		
		\textbf{Estimate for $F_2(t)$.} By paraproduct decomposition, we estimate $F_2$ as
		\begin{equation}\label{F2-est-1}
			\|F_2(t)\|_{\L^{\infty}_x}\leq \sum_{N,N_1:N_1\gtrsim N}\sum_{N_2}\left\|\int_{t/2}^{t}\P_N\left[\e^{\i(t-s)\Delta}\mathcal{O}\left(u_{N_1}u_{N_2}u\right)\right]\ \mathrm{d}s\right\|_{\L^{\infty}_x}.
		\end{equation}
		Fix $N,N_1,N_2$ and we introduce $B>0$ which will be chosen later.
		\begin{equation}\label{F2-term-1}
			\left\|\int_{t/2}^{t}\P_N\left[\e^{\i(t-s)\Delta}\mathcal{O}\left(u_{N_1}u_{N_2}u\right)\right]\ \mathrm{d}s\right\|_{\L^{\infty}_x}
		\end{equation}
		is bounded by the sum of
		\begin{equation}\label{F2-term-2}
			\left\|\int_{t/2}^{(t-B)\vee t/2}\P_N\left[\e^{\i(t-s)\Delta}\mathcal{O}\left(u_{N_1}u_{N_2}u\right)\right]\ \mathrm{d}s\right\|_{\L^{\infty}_x},
		\end{equation}
		\begin{equation}\label{F2-term-3}
			\left\|\int_{(t-B)\vee t/2}^{t}\P_N\left[\e^{\i(t-s)\Delta}\mathcal{O}\left(u_{N_1}u_{N_2}u\right)\right]\ \mathrm{d}s\right\|_{\L^{\infty}_x}.
		\end{equation}
		By dispersive estimate and Cauchy-Schwarz,
		\begin{align}
			\eqref{F2-term-2} &\lesssim\int_{t/2}^{(t-B)\vee t/2}|t-s|^{-3/2}\left\|u_{N_1}(s)u_{N_2}(s)\right\|_{\L^1_x}\|u(s)\|_{\L^{\infty}_x}\ \mathrm{d}x\mathrm{d}s\notag\\
			&\lesssim C_0t^{-3/2}B^{-1/2}\left\|u_{N_1}\right\|_{\L^{\infty}_t\L^2_x}\left\|u_{N_2}\right\|_{\L^{\infty}_t\L^2_x}\|u_0\|_{\L^1_x}.\label{F2-term-2-est}
		\end{align}
		By Bernstein's inequality,
		\begin{align}
			\eqref{F2-term-3}&\lesssim\int_{(t-B)\vee t/2}^{t}N^{3/2}\left\|u_{N_1}(s)u_{N_2}(s)\right\|_{\L^2_x}\|u\|_{\L^{\infty}_x}\ \mathrm{d}s\notag\\
			&\lesssim C_0t^{-3/2}BN^{3/2}\left\|u_{N_1}\right\|_{\L^{\infty}_t\L^2_x}\left\|u_{N_2}\right\|_{\L^{\infty}_{t,x}}\|u_0\|_{\L^1_x}\notag\\
			&\lesssim C_0t^{-3/2}BN^{3/2}N_2^{3/2}\left\|u_{N_1}\right\|_{\L^{\infty}_t\L^2_x}\left\|u_{N_2}\right\|_{\L^{\infty}_t\L^2_x}\|u_0\|_{\L^1_x}.\label{F2-term-3-est}
		\end{align}
		Choosing $B=N^{-1}N^{-1}_2$, we obtain
		\begin{equation}\label{F2-term-1-est}
			\eqref{F2-term-1}\lesssim C_0t^{-3/2}\|u_0\|_{\L^1_x}\left(N^{1/2}\left\|u_{N_1}\right\|_{\L^{\infty}_t\L^2_x}\right)\left(N_2^{1/2}\left\|u_{N_2}\right\|_{\L^{\infty}_t\L^2_x}\right).
		\end{equation}
		Summing over $N$ with $N\lesssim N_1$ firstly, then summing over $N_1,N_2$ and by \eqref{Besov-est}, we obtain
		\begin{equation}\label{F2-final-est}
			\|F_2(t)\|_{\L^{\infty}_x}\lesssim C_0C^2(\smash{\|u\|_{\L^5_{t,x}}})\varepsilon^2t^{-3/2}\|u_0\|_{\L^1_x}.
		\end{equation}
		Choosing $\varepsilon$ small enough depending on $\smash{\|u\|_{\L^5_{t,x}}}$ and some absolute constants, we can ensure
		\begin{equation}\label{F2-final-est-2}
			\|F_2(t)\|_{\L^{\infty}_x}\leq \tfrac{1}{10}C_0t^{-3/2}\|u_0\|_{\L^1_x}.
		\end{equation}
		
		By \eqref{u_l-est}, \eqref{F1-final-est-2} and \eqref{F2-final-est-2}, we can conclude that
		\begin{equation}\label{u-L^infty}
			\|u(t)\|_{\L^{\infty}_x}\leq \left[\left(C_1 + 3200C^3_1C^6_2A^3\right) + \tfrac{1}{5}C_0\right]t^{-3/2}\|u_0\|_{\L^1_x}.
		\end{equation}
		Choosing $C_0 = 10(C_1 + 3200C^3_1C^6_2A^3)$ we can close the bootstrap. The proof is complete.
	\end{proof}
	
	Since we close the bootstrap argument, we prove:
	\begin{cor}\label{base-cor}
		Let $u(t)$ be the solution to \eqref{nls} with initial data $u_0$. Assume $\smash{\sup_{t\in(0,+\infty)}\|u(t)\|_{\dot{\H}^{1/2}_x}}<+\infty$. There exists $0<\varepsilon<1$ depending on $\smash{\|u\|_{\L^5_{t,x}}}$ and some absolute constants such that if $\smash{\|u_0\|_{\dot{\B}^{1/2}_{2,1}}}\leq \varepsilon$, then there exists a constant $C(\smash{\|u\|_{\L^5_{t,x}}})$ such that
		$$\|u\|_{\L^{\infty}_x}\leq C(\smash{\|u\|_{\L^5_{t,x}}})t^{-3/2}\|u_0\|_{\L^1_x},\quad \text{holds for any $t>0$}.$$
	\end{cor}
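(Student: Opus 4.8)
The plan is to deduce this corollary from the bootstrap lemma just proved by a routine continuity argument in the endpoint $L$. If $u_0 = 0$ then $u \equiv 0$ and there is nothing to prove, so I assume $\|u_0\|_{\L^1_x} > 0$. As already noted, it suffices to treat Schwartz solutions; for such a solution $t \mapsto u(t)$ is continuous into $\L^{\infty}_x$ with $u(0) = u_0 \in \L^{\infty}_x$, so $s^{3/2}\|u(s)\|_{\L^{\infty}_x} \to 0$ as $s \to 0^+$, and hence the function $A$ of \eqref{A} is finite, non-decreasing, continuous, and satisfies $A(L) \to 0$ as $L \to 0^+$. Fix $\varepsilon \in (0,1)$ and $C_0 = C_0(\|u\|_{\L^5_{t,x}}) > 0$ as furnished by the bootstrap lemma, and assume $\|u_0\|_{\dot{\B}^{1/2}_{2,1}} \leq \varepsilon$ throughout.

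I would then consider the set $\Omega = \{\, L > 0 : A(L) \leq C_0 \|u_0\|_{\L^1_x} \,\}$ and show $\Omega = (0,\infty)$ by an open--closed argument. First, $\Omega$ is nonempty because $A(L) \to 0$ as $L \to 0^+$ while $C_0\|u_0\|_{\L^1_x} > 0$. Second, $\Omega$ is relatively closed in $(0,\infty)$ since $A$ is continuous, and because $A$ is non-decreasing $\Omega$ is a down-set, hence an interval with left endpoint $0$. Third, $\Omega$ is relatively open: for $L \in \Omega$ the bootstrap lemma improves the bound $A(L) \leq C_0\|u_0\|_{\L^1_x}$ to the strict bound $A(L) \leq \tfrac{C_0}{2}\|u_0\|_{\L^1_x}$, so by continuity of $A$ one has $A(L') \leq C_0\|u_0\|_{\L^1_x}$ for all $L'$ in a neighborhood of $L$. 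Since $(0,\infty)$ is connected, these three facts force $\Omega = (0,\infty)$.

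Consequently $A(L) \leq C_0\|u_0\|_{\L^1_x}$ for every $L > 0$, and one last application of the bootstrap lemma yields $A(L) \leq \tfrac{C_0}{2}\|u_0\|_{\L^1_x}$ for every $L > 0$; unwinding the definition \eqref{A} this reads $\|u(t)\|_{\L^{\infty}_x} \leq \tfrac{C_0}{2}\,t^{-3/2}\|u_0\|_{\L^1_x}$ for all $t > 0$, which is the asserted estimate with $C(\|u\|_{\L^5_{t,x}}) = \tfrac12 C_0(\|u\|_{\L^5_{t,x}})$. I do not expect any genuine obstacle at this stage — all the analysis has already been carried out in the bootstrap lemma — and the only points that demand a word of care are recording that the reduction to Schwartz data makes $A$ continuous with $A(0^+) = 0$, which is what allows the continuity argument to be launched, and disposing of the trivial case $u_0 = 0$.
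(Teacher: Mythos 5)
Your open--closed continuity argument in $L$ is the standard closure of the bootstrap and coincides with what the paper intends: after noting the a~priori finiteness bound \eqref{est-hold-some-t} and the continuity of Schwartz solutions in $\L^{\infty}_x$, the paper simply asserts the corollary as ``closing the bootstrap,'' leaving this continuity argument implicit. Your additional care in disposing of the trivial case $u_0=0$ and verifying $A(L)\to 0$ as $L\to 0^{+}$ is correct and fills in exactly the points the paper leaves to the reader.
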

	
	\subsection{Induction}
	Let $u(t)$ be the solution to \eqref{nls} with initial data $u_0$. For the sake of induction, we assume $\smash{\sup_{t\in(0,+\infty)}\|u(t)\|_{\dot{\H}^{1/2}_x}}<+\infty$, and there exists some $R_0\geq 0$ such that if $\smash{\|u_0\|_{\dot{\B}^{1/2}_{2,1}}}\leq R_0$, then
	\begin{equation}\label{goal-est}
		\|u(t)\|_{\L^{\infty}_x}\leq C(\smash{\|u\|_{\L^5_{t,x}}},R_0)t^{-3/2}\|u_0\|_{\L^1_x}
	\end{equation}
	holds for any $t> 0$.
	
	It suffices to show that \eqref{goal-est} extends to all solutions with $\smash{\sup_{t\in(0,+\infty)}\|u(t)\|_{\dot{\H}^{1/2}_x}}<+\infty$ and $\|u_0\|_{\dot{\B}^{1/2}_{2,1}}\leq R_0+1$, perhaps with a new constant $C(\smash{\|u\|_{\L^5_{t,x}}},R_0)$.
	
	We finish the induction by showing this increment.
	\begin{proof}[Proof of theorem {\upshape \ref{main-result}}]
		Fix $\varepsilon>0$ sufficiently small to be chosen later. Suppose that if $v(t)$ is the solution to \eqref{nls} with initial data $v_0$ with $\sup_{t\in(0,+\infty)}\|v(t)\| < +\infty$ and $\|v_0\|_{\dot{\B}^{1/2}_{2,1}}\leq R_0 + k\varepsilon$, then it holds
		\begin{equation}\label{assumption-est}
			\|v(t)\|_{\L^{\infty}_x}\leq C(\smash{\|v\|_{\L^5_{t,x}}},R_0,k,\varepsilon)t^{-3/2}\|v_0\|_{\L^1_x}.
		\end{equation}
		We aim to show that for all solutions $u(t)$ to \eqref{nls} with initial data $u_0$, if $\smash{\sup_{t\in(0,+\infty)}\|u(t)\|} < +\infty$ and $\|u_0\|_{\dot{\B}^{1/2}_{2,1}}\leq R_0 + (k+1)\varepsilon$, then it holds
		\begin{equation}\label{induction-est-to-prove}
			\|u(t)\|_{\L^{\infty}_x}\leq C(\smash{\|u\|_{\L^5_{t,x}}},R_0,k,\varepsilon)t^{-3/2}\|u_0\|_{\L^1_x},
		\end{equation}
		perhaps with a new constant $C(\smash{\|u\|_{\L^5_{t,x}}},R_0,k,\varepsilon)$. Provided $\varepsilon=\varepsilon(\smash{\|u\|_{\L^5_{t,x}}},R_0)$ is chosen, iterating over $k=0,\dots,\varepsilon^{-1}-1$ will extend \eqref{goal-est} to all solutions with $\smash{\sup_{t\in(0,+\infty)}\|u(t)\|_{\dot{\H}^{1/2}_x}}<+\infty$ and $\|u_0\|_{\dot{\B}^{1/2}_{2,1}}\leq R_0+1$. Note that $R_0=0$ corresponds to the base case.
		
		Let $u(t)$ be the solution to \eqref{nls} with initial data $u_0$ with $\smash{\sup_{t\in(0,+\infty)}\|u(t)\|_{\dot{\H}^{1/2}_x}}<+\infty$ and $\|u_0\|_{\dot{\B}^{1/2}_{2,1}}\leq R_0 + (k+1)\varepsilon$. We aim to prove \eqref{induction-est-to-prove} holds for any $t>0$.
		
		We decompose $u_0$ as
		\begin{equation}\label{decomposition}
			u_0 = \frac{R_0 + k\varepsilon}{R_0+(k+1)\varepsilon}u_0 + \frac{\varepsilon}{R_0+(k+1)\varepsilon}u_0 =: v_0+w_0.
		\end{equation}
		Then $\|v_0\|_{\dot{\B}^{1/2}_{2,1}}\leq R_0 + k\varepsilon$, $\|w_0\|_{\dot{\B}^{1/2}_{2,1}}\leq \varepsilon$ and $\|v_0\|_{\L^1_x},\|w_0\|_{\L^1_x}\leq \|u_0\|_{\L^1_x}$.
		
		By Sobolev embedding and Strichartz estimate \eqref{homogeneous-Strichartz-estimate},
		\begin{equation}\label{v-exists-condition-1}
			\|\e^{\i t\Delta}[u_0-v_0]\|_{\L^5_{t,x}([0,+\infty))}\lesssim \|\e^{\i t\Delta}|\nabla|^{1/2}[u_0-v_0]\|_{\L^5_t\L^{30/11}_x([0,+\infty))}\lesssim \|u_0-v_0\|_{\dot{\H}^{1/2}_x}\lesssim \varepsilon.
		\end{equation}
		We introduce a small parameter $\eta>0$ which is fixed and chosen later and decompose $[0,+\infty)$ into $J=J(\eta,\smash{\|u\|_{\L^5_{t,x}}})$ many intervals $\I_{j} = [t_j,t_{j+1})$ on which
		$$\left\|u\right\|_{\L^5_{t,x}(\I_j\times \mathbb{R}^3)}<\eta.$$
		By Strichartz estimate, fractional Leibniz and H\"older's inequality,
		\begin{align}
			\||\nabla|^{1/2}u\|_{\L^5_t\L^{30/11}_x(\I_j)}&\lesssim (R_0+1) + \|u\|^2_{\L^5_{t,x}(\I_j)}\||\nabla|^{1/2}u\|_{\L^5_t\L^{30/11}_x(\I_j)}\notag\\
			&\lesssim (R_0+1) + \eta^2\||\nabla|^{1/2}u\|_{\L^5_t\L^{30/11}_x(\I_j)}. \label{u-norm-condition-1}
		\end{align}
		Choosing $\eta$ small enough depending on $R_0$ and absolute constants in \eqref{u-norm-condition-1}, a standard bootstrap argument yields
		$$\||\nabla|^{1/2}u\|_{\L^5_t\L^{30/11}_x(\I_j)}\leq C(R_0).$$
		Summing over $j=1,\dots,J(A,R_0)$, we obtain
		\begin{equation}\label{v-exists-condition-2}
			\||\nabla|^{1/2}u\|_{\L^5_t\L^{30/11}_x([0,+\infty))}\leq C(\smash{\|u\|_{\L^5_{t,x}}},R_0).
		\end{equation}
		Also we have
		\begin{equation}\label{v-exists-condition-3}
			\|u_0-v_0\|_{\dot{\H}^{1/2}_x}\leq \varepsilon \leq 1.
		\end{equation}
		By proposition \ref{perturbation-thm}, \eqref{v-exists-condition-1}, \eqref{v-exists-condition-2} and \eqref{v-exists-condition-3}, choosing $\varepsilon$ small enough depending on $\smash{\|u\|_{\L^5_{t,x}}},R_0$ and absolute constants ensures that there exists a unique solution $v(t)$ to \eqref{nls} on $[0,+\infty)$ such that
		$$v\vert_{t=0} = v_0\quad \text{and}\quad \|v\|_{\L^5_{t,x}}+\sup_{t\in [0,+\infty)}\|v(t)\|_{\dot{\H}^{1/2}_x}\leq C_v = C_v(\|u\|_{\L^5_{t,x}},R_0).$$
		Since $\|v_0\|_{\dot{\B}^{1/2}_{2,1}}\leq R_0 + k\varepsilon$, by induction assumption \eqref{assumption-est}, we have
		\begin{equation}\label{v-satisfies-est}
			\|v(t)\|_{\L^{\infty}_x}\leq C(\smash{\|v\|_{\L^5_{t,x}}},R_0,k,\varepsilon)t^{-3/2}\|v_0\|_{\L^1_x}\leq C(\smash{\|u\|_{\L^5_{t,x}}},R_0,k,\varepsilon)t^{-3/2}\|u_0\|_{\L^1_x}.
		\end{equation}
		
		Let $w(t):=u(t)-v(t)$. Then $w(t)$ solves
		\begin{equation}\label{w-solves}
			\i \partial_t w + \Delta w = |u|^2u-|v|^2v,
		\end{equation}
		and by proposition \ref{perturbation-thm},
		\begin{equation}\label{w-critical-norm-bound}
			\|w\|_{\L^5_{t,x}} + \sup_{t\in [0,+\infty)}\|w(t)\|_{\dot{\H}^{1/2}_x}\leq C_w=C_w(\|u\|_{\L^5_{t,x}},R_0).
		\end{equation}
		
		Since
		\begin{equation}\label{u-infinity-bound}
			\|u(t)\|_{\L^{\infty}_x}\leq \|v(t)\|_{\L^{\infty}_x} + \|w(t)\|_{\L^{\infty}_x}\leq C(\|u\|_{\L^5_{t,x}},R_0,k,\varepsilon)t^{-3/2}\|u_0\|_{\L^1_x} + \|w(t)\|_{\L^{\infty}_x},
		\end{equation}
		it remains to prove
		\begin{equation}\label{w-remain-est}
			\|w(t)\|_{\L^{\infty}_x}\leq C(\|u\|_{\L^5_{t,x}},R_0,k,\varepsilon)t^{-3/2}\|u_0\|_{\L^1_x}
		\end{equation}
		holds for any $t> 0$.
		
		Define
		$$W(t):= \sup_{0<s\leq t} s^{3/2}\|w(s)\|_{\L^{\infty}_x}.$$
		As the analysis in the beginning of this section, it remains to prove the following bootstrap lemma:
		
		There exists $C_0=C_0(\smash{\|u\|_{\L^5_{t,x}}},R_0,k,\varepsilon)$ such that if we assume $W(L)\leq C_0\|u_0\|_{\L^1_x}$ for any $L>0$, then we have $W(L)\leq \smash{\tfrac{C_0}{2}}\|u_0\|_{\L^1_x}$.
		
		For $0<t\leq L$, since $w(t)$ solves \eqref{w-solves}, by Duhamel formula, we can write
		\begin{equation}\label{w-Duhamel-1}
			w(t) = \e^{\i t\Delta}w_0 - \i\int_{0}^{t}\e^{\i(t-s)\Delta}[|u|^2u-|v|^2v](s)\ \mathrm{d}s
		\end{equation}
		Substituting $u=v+w$, we write
		\begin{equation}\label{w-Duhamel-2}
			\begin{aligned}
				w(t) &= \e^{\i t\Delta}w_0 - \i\int_{0}^{t/2}\e^{\i(t-s)\Delta}\left[\mathcal{O}(w^3) + \mathcal{O}(w^2v) + \mathcal{O}(wv^2)\right](s)\ \mathrm{d}s\\
				&\quad -\i\int_{t/2}^{t}\e^{\i(t-s)\Delta}\left[\mathcal{O}(w^3) + \mathcal{O}(w^2v) + \mathcal{O}(wv^2)\right](s)\ \mathrm{d}s\\
				&=: w_{\text{l}}(t) + G_1(t) + G_2(t).
			\end{aligned}
		\end{equation}
		
		\textbf{Estimate for} $w_{\text{l}}${\bf.} By dispersive estimate,
		\begin{equation}\label{w_l-est}
			\|w_{\text{l}}(t)\|_{\L^{\infty}_x}\leq C_1t^{-3/2}\|w_0\|_{\L^1_x}\leq C_1t^{-3/2}\|u_0\|_{\L^1_x}.
		\end{equation}
		
		\textbf{Estimate for $G_1(t)$.} Similar to \eqref{F1-est-1} to \eqref{F1-final-est-2}, we can obtain
		\begin{equation}\label{G1-w3-est}
			\left\|\int_{0}^{t/2}\e^{\i(t-s)\Delta}\mathcal{O}(w^3)(s)\ \mathrm{d}s\right\|_{\L^{\infty}_x}\leq \left(12800C_1^3C^6_2C^3_3C_w^6+\tfrac{1}{20}C_0\right)t^{-3/2}\|u_0\|_{\L^1_x}
		\end{equation}
		where $C_3$ is from $\mathcal{O}$.
		\begin{equation}\label{G1-w2v-est-1}
			\left\|\int_{0}^{t/2}\e^{\i(t-s)\Delta}\mathcal{O}(w^2v)(s)\ \mathrm{d}s\right\|_{\L^{\infty}_x}\leq 2C_1C_4t^{-3/2}\|w^2v\|_{\L^1_s\L^1_x((0,+\infty))}
		\end{equation}
		Similar to \eqref{F1-est-2}, we introduce $M$ and do estimate
		\begin{equation}\label{G1-w2v-est-2}
			\|w^2v\|_{\L^1_s\L^1_x((0,+\infty))}\leq \|w^2v\|_{\L^1_s\L^1_x((0,M))} + \int_{M}^{+\infty}\|w^2v\|_{\L^1_x}\ \mathrm{d}s.
		\end{equation}
		By H\"older and Sobolev embedding,
		\begin{equation}\label{G1-w2v-est-3}
			\|w^2v\|_{\L^1_s\L^1_x((0,M))}\leq M\|w\|^2_{\L^{\infty}_s\L^3_x}\|v\|_{\L^{\infty}_s\L^3_x}\leq MC^3_2C^2_wC_v.
		\end{equation}
		Similar to \eqref{u_0-L2-set}, by H\"older's inequality and Sobolev embedding, we have
		$$\|v\|_{\L^2_x}\leq C^{3/4}_2\|u_0\|^{1/4}_{\L^1_x}C^{3/4}_v\quad\text{and}\quad \|w\|_{\L^2_x}\leq C^{3/4}_2\|u_0\|^{1/4}_{\L^1_x}C^{3/4}_w.$$
		Therefore by bootstrap assumption, 
		\begin{align}\label{G1-w2v-est-4}
			\int_{M}^{+\infty}\|w^2v\|_{\L^1_x}\ \mathrm{d}s&\leq \int_{M}^{+\infty}\|w\|_{\L^2_x}\|v\|_{\L^2_x}\|w\|_{\L^{\infty}_x}\ \mathrm{d}s\leq C_0C^{3/2}_2C_v^{3/4}C_w^{3/4}\|u_0\|^{3/2}_{\L^1_x}\int_{M}^{+\infty}s^{-3/2}\ \mathrm{d}s\notag\\
			&\leq 2C_0C^{3/2}_2C_v^{3/4}C_w^{3/4}M^{-1/2}\|u_0\|^{3/2}_{\L^1_x}.
		\end{align}
		Choosing $M=6400C_1^2C_4^2C_2^3C^{3/2}_vC^{3/2}_w\|u_0\|_{\L^1_x}$, by \eqref{G1-w2v-est-1} to \eqref{G1-w2v-est-4}, we obtain
		\begin{equation}\label{G1-w2v-est-5}
			\left\|\int_{0}^{t/2}\e^{\i(t-s)\Delta}\mathcal{O}(w^2v)(s)\ \mathrm{d}s\right\|_{\L^{\infty}_x}\leq \left(12800C_1^3C^6_2C^3_4C^{5/2}_vC^{7/2}_w+\tfrac{1}{20}C_0\right)t^{-3/2}\|u_0\|_{\L^1_x}.
		\end{equation}
		Similarly,
		\begin{equation}\label{G1-wv2-est-1}
			\left\|\int_{0}^{t/2}\e^{\i(t-s)\Delta}\mathcal{O}(wv^2)(s)\ \mathrm{d}s\right\|_{\L^{\infty}_x}\leq 2C_1C_5t^{-3/2}\|wv^2\|_{\L^1_s\L^1_x((0,+\infty))}
		\end{equation}
		We introduce $M$ and do estimate
		\begin{equation}\label{G1-wv2-est-2}
			\|wv^2\|_{\L^1_s\L^1_x((0,+\infty))}\leq \|wv^2\|_{\L^1_s\L^1_x((0,M))} + \int_{M}^{+\infty}\|wv^2\|_{\L^1_x}\ \mathrm{d}s.
		\end{equation}
		By H\"older and Sobolev embedding,
		\begin{equation}\label{G1-wv2-est-3}
			\|wv^2\|_{\L^1_s\L^1_x((0,M))}\leq M\|w\|_{\L^{\infty}_s\L^3_x}\|v\|^2_{\L^{\infty}_s\L^3_x}\leq MC^3_2C_wC^2_v.
		\end{equation}
		By bootstrap assumption, 
		\begin{align}\label{G1-wv2-est-4}
			\int_{M}^{+\infty}\|wv^2\|_{\L^1_x}\ \mathrm{d}s&\leq \int_{M}^{+\infty}\|v\|^2_{\L^2_x}\|w\|_{\L^{\infty}_x}\ \mathrm{d}s\leq C_0C^{3/2}_2C_v^{3/2}\|u_0\|^{3/2}_{\L^1_x}\int_{M}^{+\infty}s^{-3/2}\ \mathrm{d}s\notag\\
			&\leq 2C_0C^{3/2}_2C_v^{3/2}M^{-1/2}\|u_0\|^{3/2}_{\L^1_x}.
		\end{align}
		Choosing $M=6400C_1^2C_5^2C_2^3C^{3}_v\|u_0\|_{\L^1_x}$, by \eqref{G1-wv2-est-1} to \eqref{G1-wv2-est-4}, we obtain
		\begin{equation}\label{G1-wv2-est-5}
			\left\|\int_{0}^{t/2}\e^{\i(t-s)\Delta}\mathcal{O}(w^2v)(s)\ \mathrm{d}s\right\|_{\L^{\infty}_x}\leq \left(12800C_1^3C^6_2C^3_5C^5_vC_w+\tfrac{1}{20}C_0\right)t^{-3/2}\|u_0\|_{\L^1_x}.
		\end{equation}
		In conclusion, we obtain
		\begin{equation}\label{G1-final-est}
			\|G_1(t)\|_{\L^{\infty}_x}\leq \left[12800C_1^3C^6_2\left(C^3_3C_w^6 + C^3_4C^{5/2}_vC^{7/2}_w+C^3_5C^5_vC_w\right)+\tfrac{3}{20}C_0\right]t^{-3/2}\|u_0\|_{\L^1_x}
		\end{equation}
		
		\textbf{Estimate for $G_2(t)$.} We start to estimate $G_2(t)$. As the estimates we do in the base case, we have
		\begin{equation}\label{G2-w^3-est-1}
			\left\|\int_{t/2}^{t}\e^{\i(t-s)\Delta}\mathcal{O}(w^3)(s)\ \mathrm{d}s\right\|_{\L^{\infty}_x}\leq \sum_{N,N_1:N_1\gtrsim N}\sum_{N_2}\left\|\int_{t/2}^{t}\P_N\left[\e^{\i(t-s)\Delta}\mathcal{O}\left(w_{N_1}w_{N_2}w\right)\right]\ \mathrm{d}s\right\|_{\L^{\infty}_x}.
		\end{equation}
		Similar to \eqref{F2-term-2-est} to \eqref{F2-term-1-est}, we obtain
		\begin{equation}\label{w^3-est-2}
			\left\|\int_{t/2}^{t}\P_N\left[\e^{\i(t-s)\Delta}\mathcal{O}\left(w_{N_1}w_{N_2}w\right)\right]\ \mathrm{d}s\right\|_{\L^{\infty}_x}\lesssim C_0t^{-3/2}\|u_0\|_{\L^1_x}\left(N^{1/2}\left\|w_{N_1}\right\|_{\L^{\infty}_t\L^2_x}\right)\left(N_2^{1/2}\left\|w_{N_2}\right\|_{\L^{\infty}_t\L^2_x}\right).
		\end{equation}
		Summing over $N$ with $N\lesssim N_1$ firstly, then summing over $N_1,N_2$ and by proposition \ref{B-stability}, we obtain
		\begin{equation}\label{w^3-est-3}
			\left\|\int_{t/2}^{t}\e^{\i(t-s)\Delta}\mathcal{O}(w^3)(s)\ \mathrm{d}s\right\|_{\L^{\infty}_x}\lesssim C_0 C(\|u\|_{\L^5_{t,x}},R_0)^2\varepsilon^2t^{-3/2}\|u_0\|_{\L^1_x}\leq \tfrac{1}{20}C_0t^{-3/2}\|u_0\|_{\L^1_x}
		\end{equation}
		for $\varepsilon$ chosen small enough depending on $\smash{\|u\|_{\L^5_{t,x}}},R_0$ and some absolute constants.
		
		By paraproduct decomposition,
		\begin{equation}\label{G2-w2v-term}
			\left\|\int_{t/2}^{t}\e^{\i(t-s)\Delta}\mathcal{O}(w^2v)(s)\ \mathrm{d}s\right\|_{\L^{\infty}_x}
		\end{equation}
		is bounded by
		\begin{equation}\label{w2v-est-1}
			\sum_{N,N_1:N_1\gtrsim N}\sum_{N_2}\left\{\left\|\int_{t/2}^{t}\P_N\left[\e^{\i(t-s)\Delta}\left(w_{N_1}v_{N_2}w\right)\right]\ \mathrm{d}s\right\|_{\L^{\infty}_x} + \left\|\int_{t/2}^{t}\P_N\left[\e^{\i(t-s)\Delta}\left(v_{N_1}w_{N_2}w\right)\right]\ \mathrm{d}s\right\|_{\L^{\infty}_x}\right\}.
		\end{equation}
		Similar to \eqref{F2-term-2-est} to \eqref{F2-term-1-est}, by proposition \ref{Besov-spacetime-bounds} and proposition \ref{B-stability}, we obtain
		\begin{align}
			\eqref{G2-w2v-term}&\lesssim \|w\|_{\L^{\infty}_x}\left(\sum_{N}N^{1/2}\left\|w_{N}\right\|_{\L^{\infty}_t\L^2_x}\right)\left(\sum_{N}N^{1/2}\left\|v_{N}\right\|_{\L^{\infty}_t\L^2_x}\right)\notag\\
			&\lesssim \varepsilon C(\|u\|_{\L^5_{t,x}},R_0)C_0t^{-3/2}\|u_0\|_{\L^1_x}\leq\tfrac{1}{20}C_0t^{-3/2}\|u_0\|_{\L^1_x}\label{w2v-est-2}
		\end{align}
		for $\varepsilon$ chosen small enough depending on $\smash{\|u\|_{\L^5_{t,x}}},R_0$ and some absolute constants.
		
		Similarly,
		\begin{equation}\label{G2-wv2-term}
			\left\|\int_{t/2}^{t}\e^{\i(t-s)\Delta}\mathcal{O}(wv^2)(s)\ \mathrm{d}s\right\|_{\L^{\infty}_x}
		\end{equation}
		is bounded by
		\begin{equation}\label{wv2-est-1}
			\sum_{N,N_1:N_1\gtrsim N}\sum_{N_2}\left\{\left\|\int_{t/2}^{t}\P_N\left[\e^{\i(t-s)\Delta}\left(w_{N_1}v_{N_2}v\right)\right]\ \mathrm{d}s\right\|_{\L^{\infty}_x} + \left\|\int_{t/2}^{t}\P_N\left[\e^{\i(t-s)\Delta}\left(v_{N_1}w_{N_2}v\right)\right]\ \mathrm{d}s\right\|_{\L^{\infty}_x}\right\}.
		\end{equation}
		and
		\begin{equation}\label{wv2-est-2}
			\eqref{G2-wv2-term}\lesssim \|v\|_{\L^{\infty}_x}\left(\sum_{N}N^{1/2}\left\|w_{N}\right\|_{\L^{\infty}_t\L^2_x}\right)\left(\sum_{N}N^{1/2}\left\|v_{N}\right\|_{\L^{\infty}_t\L^2_x}\right)\leq C(\|u\|_{\L^5_{t,x}},R_0,k,\varepsilon)t^{-3/2}\|u_0\|_{\L^1_x}
		\end{equation}
		by the induction assumption.
		
		By \eqref{w_l-est}, \eqref{G1-final-est}, \eqref{w^3-est-3}, \eqref{w2v-est-2} and \eqref{wv2-est-2}, we obtain
		\begin{align}\label{w-infinity-final-est}
			\|w(t)\|_{\L^{\infty}_x}\leq &\left[C_1 + 12800C_1^3C^6_2\left(C^3_3C_w^6 + C^3_4C^{5/2}_vC^{7/2}_w+C^3_5C^5_vC_w\right) + C(\|u\|_{\L^5_{t,x}},R_0,k,\varepsilon) + \tfrac{1}{4}C_0\right]\notag\\
			&\cdot t^{-3/2}\|u_0\|_{\L^1_x}.
		\end{align}
		Choosing $C_0=20\smash{[C_1 + 12800C_1^3C^6_2(C^3_3C_w^6 + C^3_4C^{5/2}_vC^{7/2}_w+C^3_5C^5_vC_w) + C(\|u\|_{\L^5_{t,x}},R_0,k,\varepsilon)]}$, then we close the bootstrap and the proof the complete.
	\end{proof}
	\subsection{Remark on the proof of theorem \ref{main-result-2}} Since the proof of theorem \ref{main-result-2} is almost identical to theorem \ref{main-result}, we only give a remark here.
	
	Let $u_0\in \smash{\dot{\B}^{2}_{1,1}(\mathbb{R}^3)\cap \L^1_x(\mathbb{R}^3)}$ be radially symmetric and $\smash{\|u_0\|_{\dot{\B}^2_{1,1}}}\leq R_0$. Let $u(t)$ be the corresponding global solution to \eqref{nls} with initial data $u_0$. Firstly we point out that it holds
	\begin{equation}\label{rmk-thm2-1}
		\smash{\||\nabla|^{1/2}u\|_{\L^p_t\L^q_x([0,+\infty))}}\leq C(R_0)
	\end{equation}
	for any Schr\"odinger admissible pairs $(p,q)$. By \eqref{scattering-2}, we introduce $\eta>0$ and decompose $[0,+\infty)$ into $J=J(\eta,R_0)$ many intervals $\I_j = [t_j,t_{j+1})$ such that
	$$\smash{\|u\|_{\L^5_{t,x}(\I_j)}}\leq \eta.$$
	By Strichartz estimate and fractional Leibniz rules,
	$$\smash{\||\nabla|^{1/2}u\|_{\L^5_t\L^{30/11}_x(\I_j)}\lesssim \|u_0\|_{\dot{\H}^{1/2}_x} + \|u\|^2_{\L^5_{t,x}(\I_j)}\||\nabla|^{1/2}u\|_{\L^5_t\L^{30/11}_x(\I_j)}\lesssim R_0 + \eta^2\||\nabla|^{1/2}u\|_{\L^5_t\L^{30/11}_x(\I_j)}}.$$
	Choosing $\eta$ small enough depending on some absolute constants, a standard bootstrap argument yields
	$$\smash{\||\nabla|^{1/2}u\|_{\L^5_t\L^{30/11}_x(\I_j)}}\lesssim R_0.$$
	Summing over $j=1,\dots,J(R_0)$, we obtain
	$$\smash{\||\nabla|^{1/2}u\|_{\L^5_t\L^{30/11}_x([0,+\infty))}}\leq C(R_0).$$
	By Strichartz estimate, it follows that
	$$\smash{\||\nabla|^{1/2}u\|_{\L^p_t\L^q_x([0,+\infty))} \lesssim \|u_0\|_{\dot{\H}^{1/2}_x} + \|u\|^2_{\L^5_{t,x}}\||\nabla|^{1/2}u\|_{\L^5_t\L^{30/11}_x([0,+\infty))}}\leq C(R_0),$$
	which gives \eqref{rmk-thm2-1}.
	
	We also have Besov estimates:
	\begin{prop}\label{Besov-spacetime-bounds-2}
		Assume that $u_0\in \smash{\dot{\B}^{2}_{1,1}(\mathbb{R}^3)}$ is radially symmetric and $\smash{\|u_0\|_{\dot{\B}^{2}_{1,1}}}\leq R_0$ and $u(t)$ is the corresponding global solution to the Cauchy problem \eqref{nls} with initial data $u_0$. Let $(p,q)$ be a Schr\"odinger-admissible pair for $d=3$. Then we have
		\begin{equation}\label{Besov-est-3}
			\sum_{N\in 2^{\mathbb{Z}}}N^{1/2}\left\|u_N\right\|_{\L^p_t\L^q_x}\leq C(R_0).
		\end{equation}
	\end{prop}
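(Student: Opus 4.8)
The plan is to \textbf{reduce Proposition~\ref{Besov-spacetime-bounds-2} to Proposition~\ref{Besov-spacetime-bounds}}, the only genuinely new inputs being the scattering bound \eqref{scattering-2} and the embedding \eqref{embedding}. First I would record that, by \eqref{embedding}, the inclusion $\dot{\B}^2_{1,1}(\mathbb{R}^3)\hookrightarrow\dot{\B}^{1/2}_{2,1}(\mathbb{R}^3)$ holds with an absolute constant, so the radial datum $u_0$ lies in $\dot{\B}^{1/2}_{2,1}$ and satisfies $\|u_0\|_{\dot{\B}^{1/2}_{2,1}}\lesssim\|u_0\|_{\dot{\B}^2_{1,1}}\leq R_0$; in particular also $\|u_0\|_{\dot{\H}^{1/2}_x}\lesssim R_0$.

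Next I would check the hypothesis of Proposition~\ref{Besov-spacetime-bounds}, namely $\sup_{t\in(0,+\infty)}\|u(t)\|_{\dot{\H}^{1/2}_x}<+\infty$. This is precisely \eqref{rmk-thm2-1} evaluated at the endpoint admissible pair $(p,q)=(\infty,2)$ (which satisfies $\frac{2}{\infty}+\frac{3}{2}=\frac{3}{2}$), giving $\sup_{t\in(0,+\infty)}\|u(t)\|_{\dot{\H}^{1/2}_x}=\||\nabla|^{1/2}u\|_{\L^{\infty}_t\L^2_x}\leq C(R_0)<+\infty$; equivalently it is the $(\infty,2)$ case of the Strichartz/fractional-Leibniz bootstrap carried out just above \eqref{rmk-thm2-1}.

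With these two facts in hand, Proposition~\ref{Besov-spacetime-bounds} applies verbatim to the global solution $u$ and yields, for every Schr\"odinger-admissible pair $(p,q)$,
$$\sum_{N\in 2^{\mathbb{Z}}}N^{1/2}\left\|u_N\right\|_{\L^p_t\L^q_x}\leq C(\|u\|_{\L^5_{t,x}})\,\|u_0\|_{\dot{\B}^{1/2}_{2,1}}.$$
It then remains only to eliminate the dependence on $\|u\|_{\L^5_{t,x}}$: by \eqref{scattering-2} one has $\|u\|_{\L^5_{t,x}}^5\leq f(R_0)$, so $\|u\|_{\L^5_{t,x}}\leq f(R_0)^{1/5}$, and combining this with $\|u_0\|_{\dot{\B}^{1/2}_{2,1}}\lesssim R_0$ turns the right-hand side into a quantity of the form $C(R_0)$, which is exactly \eqref{Besov-est-3}.

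There is no real analytic obstacle here; the only point demanding care is \textbf{bookkeeping of constants}, i.e.\ verifying that every constant feeding into Proposition~\ref{Besov-spacetime-bounds} (the Strichartz, Bernstein and Hardy--Littlewood constants, and the number of slabs produced when splitting $[0,+\infty)$ into pieces of small $\L^5_{t,x}$ norm) is either absolute or controlled by $R_0$ through \eqref{scattering-2}, so that no uncontrolled dependence survives. If a self-contained presentation is preferred, one can instead repeat the bootstrap proof of Proposition~\ref{Besov-spacetime-bounds} line by line: use \eqref{scattering-2} to partition $[0,+\infty)$ into $J=J(\eta,R_0)$ intervals $\I_j$ with $\|u\|_{\L^5_{t,x}(\I_j)}<\eta$, run the Duhamel--paraproduct--Strichartz estimate on each slab with $\eta$ fixed small depending only on absolute constants, and sum over the $R_0$-many slabs, with \eqref{embedding} controlling the linear contribution by $\|u_0\|_{\dot{\B}^{1/2}_{2,1}}\lesssim R_0$.
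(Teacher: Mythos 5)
Your argument is correct and a bit cleaner than what the paper does. The paper dispatches this proposition with a single sentence — ``the proofs are almost identical to Proposition~\ref{Besov-spacetime-bounds}'' — meaning: re-run the Duhamel/paraproduct/Strichartz bootstrap on slabs, now using Dodson's unconditional scattering bound \eqref{scattering-2} in place of the Kenig--Merle bound \eqref{scattering} to control the number of slabs $J$, and the embedding \eqref{embedding} to control the linear term. You instead treat Proposition~\ref{Besov-spacetime-bounds} as a black box and simply verify its hypotheses: $u_0\in\dot{\B}^{1/2}_{2,1}$ via \eqref{embedding}, and $\sup_t\|u(t)\|_{\dot{\H}^{1/2}_x}<\infty$ via \eqref{rmk-thm2-1} at the endpoint pair $(p,q)=(\infty,2)$. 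Then the resulting constant $C(\|u\|_{\L^5_{t,x}})\|u_0\|_{\dot{\B}^{1/2}_{2,1}}$ collapses to $C(R_0)$ because $\|u\|_{\L^5_{t,x}}\le f(R_0)^{1/5}$ by \eqref{scattering-2} and $\|u_0\|_{\dot{\B}^{1/2}_{2,1}}\lesssim R_0$ by \eqref{embedding}. This is logically sound (one does implicitly use that the constant in \eqref{Besov-est} is monotone in $\|u\|_{\L^5_{t,x}}$, which holds since it enters only through the number of slabs $J$), and it avoids redundancy. Your closing remark correctly identifies that a self-contained re-derivation — the paper's implicit choice — is the other viable presentation; either is fine, and you have correctly identified that \eqref{scattering-2} and \eqref{embedding} are the only genuinely new inputs.
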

	\begin{prop}\label{B-stability-2}
		Let $(p,q)$ be a Schr\"odinger admissible pair for $d=3$. Suppose that $u(t)$ and $v(t)$ are the global solutions to \eqref{nls} with corresponding radially symmetric initial data $u_0,v_0$ respectively obeying the bound
		$$\smash{\|u_0\|_{\dot{\B}^{2}_{1,1}},\ \|v_0\|_{\dot{\B}^{2}_{1,1}}}\leq R_0.$$
		Then
		\begin{equation}\label{Besov-stability-2}
			\sum_{N\in 2^{\mathbb{Z}}}N^{1/2}\|(u-v)_N\|_{\L^p_t\L^q_x}\leq C(R_0)\|u_0-v_0\|_{\dot{\B}^{1/2}_{2,1}}.
		\end{equation}
	\end{prop}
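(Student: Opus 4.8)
The plan is to transcribe, essentially verbatim, the proof of Proposition~\ref{B-stability}, with the roles played there by Proposition~\ref{Besov-spacetime-bounds} and the finiteness of the scattering norm now taken over by Proposition~\ref{Besov-spacetime-bounds-2} and Dodson's bound~\eqref{scattering-2}. First I would record that every structural hypothesis used in the $\dot{\B}^{1/2}_{2,1}$ theory is automatic in the present setting: the embedding~\eqref{embedding} gives $\|u_0\|_{\dot{\B}^{1/2}_{2,1}},\|v_0\|_{\dot{\B}^{1/2}_{2,1}}\lesssim R_0$; the Strichartz bound~\eqref{rmk-thm2-1} gives $\sup_{t}\|u(t)\|_{\dot{\H}^{1/2}_x},\sup_{t}\|v(t)\|_{\dot{\H}^{1/2}_x}\le C(R_0)$; and~\eqref{scattering-2} gives $\|u\|_{\L^5_{t,x}}+\|v\|_{\L^5_{t,x}}\le C(R_0)$. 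In particular the number of time intervals produced below depends only on $R_0$, so that radial symmetry enters only through Dodson's theorem and plays no further role.

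I would first treat $p=q=10/3$. Fix a small $\eta>0$ to be chosen at the end, and use~\eqref{scattering-2} to split $[0,+\infty)$ into $J=J(\eta,R_0)$ intervals $\I_j$ on each of which $\|u\|_{\L^5_{t,x}(\I_j)}+\|v\|_{\L^5_{t,x}(\I_j)}<\eta$. On a slab $\I_j\times\mathbb{R}^3$, the Duhamel formula together with the Strichartz estimates of Proposition~\ref{strichartz-est} gives
$$\sum_{N}N^{1/2}\|(u-v)_N\|_{\L^{10/3}_{t,x}(\I_j)}\lesssim \|u_0-v_0\|_{\dot{\B}^{1/2}_{2,1}}+\sum_{N}N^{1/2}\big\|\big(|u|^2u-|v|^2v\big)_N\big\|_{\L^{10/7}_{t,x}(\I_j)}.$$
Writing $|u|^2u-|v|^2v=u^2\overline{(u-v)}+\bar v u(u-v)+|v|^2(u-v)$, applying the paraproduct decomposition and the pointwise maximal-function bound it supplies, and then H\"older's inequality exactly as in~\eqref{B-stability-est-3}--\eqref{B-stability-est-5}, the nonlinear term is dominated by a finite sum of terms of schematic form
$$\|\,|u|\vee|v|\,\|_{\L^5_{t,x}(\I_j)}\,\|u-v\|_{\L^{10/3}_t\L^{15/2}_x(\I_j)}\sum_{N_1}N_1^{1/2}\big(\|u_{N_1}\|_{\L^5_t\L^{30/11}_x(\I_j)}+\|v_{N_1}\|_{\L^5_t\L^{30/11}_x(\I_j)}\big)$$
and
$$\|\,|u|\vee|v|\,\|_{\L^5_{t,x}(\I_j)}^2\,\sum_{N_1}N_1^{1/2}\|(u-v)_{N_1}\|_{\L^{10/3}_{t,x}(\I_j)}.$$
By Bernstein's inequality~\eqref{Bernstein-inequalities-3}, $\|u-v\|_{\L^{10/3}_t\L^{15/2}_x(\I_j)}\lesssim\sum_{N_1}N_1^{1/2}\|(u-v)_{N_1}\|_{\L^{10/3}_{t,x}(\I_j)}$, and by Proposition~\ref{Besov-spacetime-bounds-2} the frequency-localized sums of $u$ and of $v$ are bounded by $C(R_0)$; hence
$$\sum_{N}N^{1/2}\|(u-v)_N\|_{\L^{10/3}_{t,x}(\I_j)}\lesssim \|u_0-v_0\|_{\dot{\B}^{1/2}_{2,1}}+C(R_0)\,\eta\sum_{N}N^{1/2}\|(u-v)_N\|_{\L^{10/3}_{t,x}(\I_j)}.$$
Choosing $\eta$ small depending on $R_0$ and the absolute constants, a standard bootstrap argument on each $\I_j$ (the left side being a priori finite by the qualitative local theory) and then summing over $j=1,\dots,J(\eta,R_0)$ yields~\eqref{Besov-stability-2} for $p=q=10/3$.

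For a general Schr\"odinger-admissible pair $(p,q)$ I would run the Duhamel--Strichartz step once more, bounding $\sum_N N^{1/2}\|(u-v)_N\|_{\L^p_t\L^q_x}$ by $\|u_0-v_0\|_{\dot{\B}^{1/2}_{2,1}}$ plus $\big(\|u\|_{\L^5_{t,x}}+\|v\|_{\L^5_{t,x}}\big)$ times the $\L^{10/3}_{t,x}$ sum just controlled, and then invoking~\eqref{scattering-2} to replace the $\L^5_{t,x}$ factors by $C(R_0)$. I do not expect a genuine obstacle: the argument is a transcription of the proof of Proposition~\ref{B-stability}, and the only point that requires care is that every quantity entering the estimates — the number of intervals $J$, the spacetime norms of $u$ and $v$, and the Besov sums — is controlled purely in terms of $R_0$, which is exactly what~\eqref{scattering-2}, \eqref{rmk-thm2-1} and Proposition~\ref{Besov-spacetime-bounds-2} guarantee, so that the final constant depends on $R_0$ alone as claimed.
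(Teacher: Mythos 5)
Your proposal is correct and takes essentially the same approach the paper indicates: the paper itself states that the proof of Proposition~\ref{B-stability-2} is ``almost identical'' to that of Proposition~\ref{B-stability}, and you correctly transcribe that argument, replacing the input Proposition~\ref{Besov-spacetime-bounds} with Proposition~\ref{Besov-spacetime-bounds-2} and using Dodson's bound~\eqref{scattering-2} together with~\eqref{rmk-thm2-1} to ensure that the $\L^5_{t,x}$ norms, the interval count $J$, and the Besov sums all depend only on $R_0$.
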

	The proofs are almost identical to the proposition \ref{Besov-spacetime-bounds} and \ref{B-stability}. Actually proposition \ref{Besov-spacetime-bounds-2} and \ref{B-stability-2} are parallel versions of proposition 4.3 and 4.4 in \cite{K-2025}.
	
	Finally, we remark that the proof of theorem \ref{main-result-2} does not require any perturbation theorem. When we do induction and decompose the solution as \eqref{decomposition}, $\smash{\|v_0\|_{\dot{\B}^{2}_{1,1}}}\leq R_0 + k\varepsilon\leq R_0 + 1$ ensures the existence of global solution $v(t)$ to the equation with $v\vert_{t=0} = v_0$ and
	$$\smash{\|v\|_{\L^5_{t,x}([0,+\infty))} + \||\nabla|^{1/2}v\|_{\L^p_t\L^q_x([0,+\infty))}}\leq C(R_0)$$
	for Schr\"odinger admissible $(p,q)$, which is enough to perform the induction.
	
	With these observations, Theorem \ref{main-result-2} can be proved in the same manner as Theorem \ref{main-result}.
	
	\begin{acknowledgements}
		This research was partially supported by NSFC, Grant No. 12471232 and 12288201, CAS Project for Young Scientists in Basic Research, Grant No. YSBR-031. The author would like to thank his advisor Professor Chenjie Fan for insightful discussions and comments on the proofs. This work would not have taken its present form without his guidance.
	\end{acknowledgements}


\begin{thebibliography}{99}
		\bibitem{C-K-S-T-T-2008} J. Colliander, M. Keel, G. Staffilani, H. Takaoka and T. Tao, \textit{Global well-posedness and scattering for the energy-critical nonlinear Schr\"odinger equation in $\mathbb{R}^3$}, Ann. of Math. (2), 167(3):767–865,2008.
		\bibitem{C-W-1990} T. Cazenave and F. Weissler, \textit{The Cauchy problem for the critical nonlinear Schr\"odinger equation in $\H^s$}, Nonlinear Anal. Theory, Methods Appl. 14 (1990), 807–836. MR1055532 (91j:35252).
		\bibitem{D-2023} B. Dodson, \textit{Scattering for the defocusing, Nonlinear Schr\"odinger equation with initial data in a critical space}, International Mathematics Research Notices, Volume 2023, Issue 23, December 2023, Pages 19932–19962, \url{https://doi.org/10.1093/imrn/rnac296}.
		\bibitem{F-K-V-Z-2025} C. Fan, R. Killip, M. Visan and Z. Zhao, \textit{Dispersive decay for the mass-critical nonlinear Schr\"odinger equation}, Math. Z. 311, 21 (2025). \url{https://doi.org/10.1007/s00209-025-03821-8}.
		\bibitem{F-S-Z-2024} C. Fan, G. Staffilani and Z. Zhao, \textit{On decaying properties of nonlinear Schr\"odinger equations}, SIAM J. Math. Anal., 56(3):3082–3109, 2024.
		\bibitem{F-Z-2021} C. Fan and Z. Zhao, \textit{Decay estimates for nonlinear Schr\"odinger equations}, Discrete and Continuous Dynamical Systems,41(8):3973–3984,2021.
		\bibitem{F-Z-2023} C. Fan and Z. Zhao, \textit{A note on decay property of nonlinear Schr\"odinger equations}, Proc. Amer. Math. Soc. 151 (2023), no. 6, 2527–2542.
		\bibitem{G-2014} L. Grafakos, \textit{Classical Fourier Analysis}, 3rd Edition, Graduate Texts in Math. 249, Springer, NY, 2014.
		\bibitem{G-H-S-2023} Z. Guo, C. Huang and L. Song, \textit{Pointwise decay of solutions to the energy critical nonlinear Schr\"odinger equations}, Journal of Differential Equations, 366:71–84, 2023.
		\bibitem{G-V-1989} J. Ginibre and G. Velo, \textit{Smoothing Properties and Retarded Estimates for Some Dispersive Evolution Equations}, Comm. Math. Phys., 123 (1989), 535-573.
		\bibitem{I-K-T-2023} M. Ifrim, H. Koch and D. Tataru, \textit{Dispersive decay of small data solutions for the KdV equation}, Ann. Sci. \'Ec. Norm. Sup\'er. (4), 56(6):1709–1746, 2023.
		\bibitem{K-2025} M. Kowalski, \textit{Dispersive decay for the energy-critical nonlinear Schr\"odinger equation}, Journal of Differential Equations, Volume 429, 5 June 2025, Pages 392-426.
		\bibitem{K-M-2006} C. Kenig and F. Merle, \textit{Global well-posedness, scattering and blow-up for the energy critical, focusing, non-linear Schr\"odinger equation in the radial case}, Invent. Math. 166 (2006), 645–675. MR2257393 (2007g:35232).
		\bibitem{K-M-2010} C. Kenig and F. Merle, \textit{Scattering for $\dot{\H}^{1/2}$ bounded solutions to the cubic, defocusing NLS in $3$ dimensions}, Transactions of the American Mathematical Society 362 (2010), no. 4, 1937–1962.
		\bibitem{K-T-1998} M. Keel and T. Tao, \textit{Endpoint Stnchartz estimates}, Amer. J. Math., 120 (1998), 955-980.
		\bibitem{K-V-2012} R. Killip and M. Visan, \textit{Global well-posedness and scattering for the defocusing quintic NLS in three dimensions}, Anal. PDE, 5(4):855–885, 2012.
		\bibitem{K-V-2013} R. Killip and M. Visan, \textit{Nonlinear Schr\"odinger equations at critical regularity}, In ``Evolution equations", 325–437, Clay Math. Proc., 17, Amer. Math. Soc., Providence, RI, 2013.
		\bibitem{L-2019} D. Li, \textit{On Kato–Ponce and fractional Leibniz}, Revista Matem\'atica Iberoamericana, 35(1):23-100, 2019.
		\bibitem{L-S-1978} J. E. Lin and W. A. Strauss, \textit{Decay and scattering of solutions of a nonlinear Schr\"odinger equation}, Journal of Functional Analysis, 30(2):245–263, 1978.
		\bibitem{M-2015} J. Murphy, \textit{The radial defocusing nonlinear Schr\"odinger equation in three space dimensions}, Communications in Partial Differential Equations, 40(2):265–308, 2015.
		\bibitem{M-S-1972} C. S. Morawetz and W. A. Strauss, \textit{Decay and scattering of solutions of a nonlinear relativistic wave equation}, Comm. Pure Appl. Math. 25 (1972), 1–31.
		\bibitem{N-W-2019} M. Nakamura and T. Wada, \textit{Strichartz type estimates in mixed Besov spaces with application to critical nonlinear Schr\"odinger equations}, J. Differential Equations, 267(5):3162–3180, 2019.
		\bibitem{T-2006} T. Tao, \textit{Nonlinear dispersive equations: local and global analysis}, vol. 106, American Mathematical Soc., 2006.
		\bibitem{Y-1987} K. Yajima, \textit{Existence of solutions for Schr\"odinger evolution equations}, Commun. Math. Phys. 110 (1987), 415-426.
	\end{thebibliography}
\end{document}